\newtheorem{theorem}{Theorem}[section]
\newtheorem{lemma}[theorem]{Lemma}
\newtheorem{proposition}[theorem]{Proposition}
\newcommand{\Z}{\mathbb Z}
\DeclareMathOperator{\ord}{ord}
\DeclareMathOperator{\supp}{Supp}
\newcommand{\la}{\langle}
\newcommand{\ra}{\rangle}
\newcommand{\be}{\begin{equation}}
\newcommand{\ee}{\end{equation}}
\newcommand{\und}{\;\mbox{ and }\;}
\newcommand{\ber}{\begin{eqnarray}}
\newcommand{\eer}{\end{eqnarray}}
\newcommand{\Summ}[1]{\underset{#1}{\sum}}
\newcommand{\Fc}{\mathcal F}
\newcommand{\vp}{\mathsf v}
\newcommand{\h}{\mathsf h}
\DeclareSymbolFont{goo}{OMS}{cmsy}{b}{n}
\DeclareMathSymbol{\gooT}{\mathalpha}{goo}{"1}
\newcommand{\bdot}{\mathbin{\gooT}}
\begin{document}

\title{Inverse Zero-Sum Problems III: Addendum}
\author{David J. Grynkiewicz}

\begin{abstract}
The Davenport constant for a finite abelian group $G$ is the minimal length $\ell$ such that any sequence of $\ell$ terms from $G$ must contain a nontrivial zero-sum sequence.  For the group $G=(\Z/n\Z)^2$, its value is $2n-1$, which is a classical result of Olson \cite{Olson-rk2}. The associated inverse question is to characterize those sequences of maximal length $2n-2$ that do not have a nontrivial zero-sum subsequence. A simple argument shows this to be equivalent to characterizing all  minimal zero-sum sequences of maximal length $2n-1$, with a minimal zero-sum sequence being one that cannot have its terms partitioned into two proper, nontrivial zero-sum subsequences. This was done in a series of papers  \cite{Reiher-propB-thesis} \cite{Gao-Ger-propB} \cite{propB-GGG} \cite{Prop-Schmid} \cite{Schlage-case9-propB}. However, there is a missing case in
\cite[Proposition 4.2]{propB-GGG}, leading to a missing case not treated in the portion of the proof covered in \cite{propB-GGG}. Both these deficiencies are corrected here using the methods of \cite{propB-GGG}. The correction contained in this manuscript will be incorporated into a larger, forthcoming work that encompasses the entirety of the characterization, and is posted in advance so that the corrected argument is made available to the research community before this larger project is completed.
\end{abstract}

\maketitle

\section{Introduction}\label{sec-intro}
Regarding combinatorial notation for sequences and subsums, we utilize the standardized system surrounding multiplicative strings as outlined in the references \cite{Ger-book} \cite{Gr-book}. For the reader new to this notational system, a complete summary of all relevant definitions and notation is given in Section \ref{Sec-Prelim}.

Let $G$ be a finite abelian group. The Davenport Constant $\mathsf D(G)$ for $G$ is the minimal integer $\ell$ such that any sequence of terms from $G$ with length at least $\ell$ must contain a nontrivial zero-sum subsequence, so $|S|\geq \mathsf D(G)$ implies $0\in \Sigma(S)$.  It is one of the most important zero-sum invariants in Combinatorial Number Theory. For a rank two abelian group $G=(\Z/m\Z)\times (\Z/n\Z)$, where $m\mid n$, we have $\mathsf D(G)=m+n-1$ by a classical result of Olson \cite{Olson-rk2} \cite[Theorem 5.8.3]{Ger-book}. The associated inverse question is to characterize those maximal length  sequences that do not have a nontrivial zero-sum subsequence.

If $S$ is a \textbf{zero-sum free} sequence of terms from an abelian group $G$, meaning $S$ has no proper, nontrivial zero-sum subsequence, then it is readily noted that extending the sequence by concatenating $S$ with the term $-\sigma(S)$ results in a minimal zero-sum  $S\bdot -\sigma(S)$, where a \textbf{minimal zero-sum} sequence is a zero-sum sequence that cannot have its terms partitioned into two proper, nontrivial zero-sum subsequences, while removing a term from a minimal zero-sum sequence results in a zero-sum free sequence.  As such, characterizing all maximal length zeros-sum free sequences is equivalent to characterizing all maximal length minimal zero-sum sequences. For the group  $G=(\Z/m\Z)\times (\Z/n\Z)$, where $m\mid n$, this was accomplished in a series of papers \cite{Reiher-propB-thesis} \cite{Gao-Ger-propB} \cite{propB-GGG} \cite{Prop-Schmid} \cite{Schlage-case9-propB}. However,  there is a missing case in
\cite[Proposition 4.2]{propB-GGG}, leading to a missing case not treated in the portion of the proof covered in \cite{propB-GGG}. The goal of this manuscript is to correct this
deficiency  by using the methods of \cite{propB-GGG} to handle the missing case. The correction contained in this manuscript will be incorporated into a larger, forthcoming work that encompasses the entirety of the characterization, and is provided here so that the corrected argument is made available to the research community before this larger project is completed.

Let $G=(\Z/n\Z)^2$. A basis for $G$ is a pair of elements $e_1,e_2\in G$ such that $\la e_1,e_2\ra=\la e_1\ra\oplus \la e_2\ra=(\Z/n\Z)\oplus (\Z/n\Z)=G$. For an integer $k\geq n$, we let $\mathsf s_{\leq k}(G)$ denote the minimal integer $\ell$ such that any sequence $S$ of terms from $G$ with length at least $\ell$ contains a nontrivial zero-sum of length at most $k$, so $|S|\geq \mathsf s_{\leq k}(G)$ implies $0\in \Sigma_{\leq k}(S)$. As part of Olson's proof that $\mathsf D((\Z/n\Z)^2)=2n-1$, he also showed that $\mathsf s_{\leq n}((\Z/n\Z)^2)=3n-2$ \cite{Olson-rk2} \cite[Theorem 5.8.3]{Ger-book}, and the inverse zero-sum question for the related constant $\mathsf s_{\leq n}((\Z/n\Z)^2)$ will play an important role in the proof of the inverse question for $\mathsf D((\Z/n\Z)^2)$.

To describe the characterization, we say that a sequence $S$ of terms from $G=(\Z/n\Z)^2$ has \textbf{Property A} if there is an (ordered) basis $(e_1,e_2)$ for $G$ such that $$\supp(S)\subseteq \{e_1\}\cup \big(\la e_1\ra+e_2\big).$$ Note that if $S$ has Property A with respect to the basis $(e_1,e_2)$, then it also satisfies Property A with respect to the basis $(e_1,e'_2)$ for any $e'_2\in \la e_1\ra+e_2$, so the role of $e_2$ is to define the coset $\la e_1\ra+e_2$. If $S$ is a zero-sum sequence of terms from $G$ satisfying Property A, then the number of terms from $\la e_1\ra+e_2$ must be congruent to $0$ modulo $n$. As such, if $|S|=n$, then either $$S=e_1^{[n]}\quad\mbox{ or } \quad S={\prod}^\bullet_{i\in [1,n]}(x_ie_1+e_2),$$
where $x_1,\ldots,x_n\in [0,n-1]$ with $x_1+\ldots+x_n\equiv 0\mod n$. Likewise, if $|S|=2n-1$, then \be\label{char-propB} S=e_1^{[n-1]}\bdot {\prod}^\bullet_{i\in [1,n]}(x_ie_1+e_2), \ee where $x_1,\ldots,x_n\in [0,n-1]$ with $x_1+\ldots+x_n\equiv 1\mod n$. Note, in the latter case, this forces there to be distinct terms from $\la e_1\ra+e_2$ contained in $S$, else we obtain the contradiction $x_1+\ldots+x_n=nx_1\equiv 0\not\equiv 1\mod n$.

It is rather immediate that any sequence satisfying \eqref{char-propB} is a minimal zero-sum of maximal length $2n-1$. If the converse to this statement holds for $G=(\Z/n\Z)^2$, that is, every minimal zero-sum sequence  with length $2n-1$ must have the form given in \eqref{char-propB} with respect to some basis $(e_1,e_2)$, then we say \textbf{Property B} holds for  $G=(\Z/n\Z)^2$. Per the discussion in the previous paragraph, this is equivalent to every minimal zero-sum  with maximal length $2n-1$ having Property A. In particular, Property B holding for $G=(\Z/n\Z)^2$ implies that any minimal zero-sum sequence with maximal length $2n-1$ must contain a term with multiplicity $n-1$. This is often how Property B was defined in earlier literature, for a fairly basic argument shows that if $S$ is a minimal zero-sum with $|S|=2n-1$ and $e_1$ is a term of $S$ with multiplicity at least $n-1$, then $S$ satisfies Property A (and thus has the form given in \eqref{char-propB}) with respect to some basis $(e_1,e_2)$, where we emphasize that the first element $e_1$ occurring in the basis $(e_1,e_2)$ is the same element $e_1$ that occurs with multiplicity $n-1$ in $S$.
Indeed, if $\varphi:G\rightarrow G/\la e_1\ra$ is the reduction modulo $\la e_1\ra$ homomorphism, then $G/\la e_1\ra\cong \Z/n\Z$, the subsequence of $\varphi(S)$ consisting of nonzero terms must be a minimal zero-sum sequence of length $n$ in $\varphi(G)\cong \Z/n\Z$, and the characterization of such sequences is known: they consist of a single order $n$ element repeated $n$ times \cite[Theorem 11.1]{Gr-book}. Also worth noting, if $e_1$ and $e_2$ are distinct terms that both occur with multiplicity $n-1$ in the minimal zero-sum sequence $S$ with $|S|=2n-1$, then $S=e_1^{[n-1]}\bdot e_2^{[n-1]}\bdot (e_1+e_2)$ with $(e_1,e_2)$ a basis with respect to which $S$ satisfies Property A.

Let $G=(\Z/n\Z)^2$. As noted earlier, the maximal length of a sequence $S$ of terms from $G$ with $0\notin \Sigma_{\leq n}(S)$ is $|S|=3n-3$. If every sequence $S$ of terms from $G$ with $0\notin \Sigma_{\leq n}(S)$ and $|S|=3n-3$
 must have the form $S=e_1^{[n-1]}\bdot e_2^{[n-1]}\bdot e_3^{[n-1]}$ for some $e_1,e_2,e_3\in G$, then we say that $G=(\Z/n\Z)^2$ has \textbf{Property C}. Note $0\notin \Sigma_{\leq n}(S)$ ensures $\ord(e_1)=\ord(e_2)=\ord(e_3)=n$. Thus there is some $f_1\in G$ such that $(f_1,e_2)$ is a basis for $G$. Letting $e_1=xf_1+ye_2$, we see that $(e_1,e_2)$ is a basis for $G$ unless $\gcd(x,n):=n/h>1$. However, if this were the case, then $T=e_1^{[h]}\bdot e_2^{[xh]}$ is a zero-sum subsequence of $S$ for some $x\in [0,\frac{n}{h}-1]$ having length $|T|= h+xh\leq n$, contradicting that $0\notin \Sigma_{\leq n}(S)$. Therefore  $(e_1,e_2)$ is a basis for $G$, and likewise  $(e_1,e_3)$ and $(e_2,e_3)$ must also be bases for $G$.

 There are two important and fairly nontrivial facts regarding Property C. The first is that if Property B holds for $G$, then property C holds for $G$. This is shown in \cite{Gao-Ger-propB}.
 The second is that any sequence $S$ of terms from $G$  with $0\notin \Sigma_{\leq n}(S)$ and $|S|=3n-3$ that satisfies the structural condition given in Property C must also satisfy Property A. What this means is that, if $G$ satisfies Property C, then the sequences $S$ of terms from $G$ with $0\notin \Sigma_{\leq n}(S)$ and $|S|=3n-3$ are characterized precisely as those for which there exists a basis $(e_1,e_2)$ for $G$ and $x\in [1,n-1]$ such that \be\label{PropC-char} S=e_1^{[n-1]}\bdot e_2^{[n-1]}\bdot (xe_1+e_2)^{[n-1]}.\ee The argument for this second fact, showing how the weaker condition given in Property C is nonetheless sufficient to derive the  precise structural characterization given above, can be found in several locations. It first appears (for $n$ prime) in \cite{emdeBoas-propC}. The general caes  appears as a special case of \cite[Theorem 3.1, Lemma 4.4]{Wolfgang-propC}, and can also be derived in a few pages by using the simplified core of  arguments from  \cite{Gr-uzi}, as shown in the forthcoming paper \cite{Gr-CL}.

\section{Notation and Preliminaries}\label{Sec-Prelim}

In  this section, we summarize the standardized notation for sequences and subsums as well as several lemmas from \cite{propB-GGG} that we will need to utilize in the main proof. In this paper, all intervals will be discrete, so for $x,\,y\in \Z$, we have $[x,y]=\{z\in \Z:\; x\leq z\leq y\}$.

Let $G$ be an abelian group. Following the tradition of Combinatorial Number Theory, a \textbf{sequence} of terms from $G$ is a finite, unordered string of elements from $G$.
 The term multi-set would also be appropriate but is not traditionally used. We let $\Fc(G)$ denote the free abelian monoid with basis $G$, which consists of all (finite and unordered) sequences $S$ of terms from $G$ written as multiplicative strings using the boldsymbol $\bdot$ . Thus a sequence $S\in \Fc(G)$ has the form  $$S=g_1\bdot\ldots\bdot g_\ell$$ with $g_1,\ldots,g_\ell\in G$ the terms in $S$.
Then $$\vp_g(S)=|\{i\in [1,\ell]:\; g_i=g\}|$$ denotes the multiplicity of the terms $g$ in $S$, allowing us to also represent a sequence $S$ in the form $$S={\prod}^\bullet_{g\in G}g^{[\vp_g(S)]},$$ where $g^{[n]}={\underbrace{g\bdot\ldots\bdot g}}_n$ denotes a sequence consisting of the term $g\in G$ repeated $n\geq 0$ times.
 The maximum multiplicity of a term of $S$ is the height of the sequence, denoted $$\h(S)=\max\{\vp_g(S):\; g\in G\}.$$ The support of the sequence $S$ is the subset of all elements of $G$ that are contained in $S$, that is, that occur with positive multiplicity in $S$, which is denoted $$\supp(S)=\{g\in G:\; \vp_g(S)>0\}.$$
 The length of the sequence $S$ is $$|S|=\ell=\Summ{g\in G}\vp_g(S).$$  A sequence $T\in \Fc(G)$ with $\vp_g(T)\leq \vp_g(S)$ for all $g\in G$ is called a subsequence of $S$, denoted $T\mid S$, and in such case, $S\bdot T^{[-1]}=T^{[-1]}\bdot S$ denotes the subsequence of $S$ obtained by removing the terms of $T$ from $S$, so $\vp_g(S\bdot T^{[-1]})=\vp_g(S)-\vp_g(T)$ for all $g\in G$.

 Since the terms of $S$ lie in an abelian group, we have the following notation regarding subsums of terms from $S$. We let $$\sigma(S)=g_1+\ldots+g_\ell=\Summ{g\in G}\vp_g(S)g$$ denote the sum of the terms of $S$ and call $S$ a zero-sum sequence when $\sigma(S)=0$. For $n\geq 0$, let
 \begin{align*}&\Sigma_n(S)=\{\sigma(T):\; T\mid S, \; |T|=n\},\quad
 \Sigma_{\leq n}(S)=\{\sigma(T):\; T\mid S, \; 1\leq |T|\leq n\},\quad\und\quad\\
 &\Sigma(S)=\{\sigma(T):\; T\mid S, \; |T|\geq 1\}
\end{align*}
 denote the variously restricted collections of  subsums of $S$. Finally, if $\varphi:G\rightarrow G'$ is a map, then $$\varphi(S)=\varphi(g_1)\bdot\ldots\bdot \varphi(g_\ell)\in \Fc(G')$$ denotes the sequence of terms from $G'$ obtained by applying $\varphi$ to each term from $S$.

 We will need the following basic lemmas (Lemmas 3.1, 3.2 and 3.3 from \cite{propB-GGG}) regarding perturbation of minimal zero-sum sequences satisfying \eqref{char-propB}. To this end, for a group $G=(\Z/m\Z)^2$, let $\Upsilon(G)$ be the set of all sequences $S$ satisfying \eqref{char-propB} (with $n=m$) with respect to some basis $(e_1,e_2)$ for $G$, and partition $$\Upsilon(G)=\Upsilon_u(G)\cup \Upsilon_{nu}(G),$$ where  $\Upsilon_u(G)\subseteq \Upsilon(G)$ contains all such sequences that have a  \emph{unique} term with multiplicity $m-1$, and $\Upsilon_{nu}(G)\subseteq \Upsilon(G)$ contains all such sequences $S$ of the form $S=e_1^{[m-1]}\bdot e_2^{[m-1]}\bdot (e_1+e_2)$ for some basis $(e_1,e_2)$.

 \begin{lemma}\label{lemma-pertub-I}
 Let $m\geq 4$, let $G=(\Z/m\Z)^2$, let $g\in G$ and suppose $$S=f_1^{[m-1]}\bdot {\prod}^\bullet_{i\in [1,m]}(x_if_1+f_2)\in \Upsilon_{u}(G),$$ where $x_1,\ldots,x_m\in [0,m-1]$ with $(f_1,f_2)$ a basis for $G$.
 \begin{itemize}
\item[1.] If $S'=f_1^{[-2]}\bdot S\bdot (f_1+g)\bdot (f_1-g)\in \Upsilon(G)$, then $g=0$ and $S'=S$.
\item[2.] If $S'=f_1^{[-1]}\bdot (x_jf_1+f_2)^{[-1]}\bdot S\bdot (f_1+g)\bdot (x_jf_1+f_2-g)\in \Upsilon(G)$, for some $j\in [1,m]$, then $g\in \{0,\, (x_j-1)f_1+f_2\}$ and $S'=S$.
\item[3.] If $S'=(x_jf_1+f_2)^{[-1]}\bdot (x_kf_1+f_2)^{[-1]}\bdot S\bdot (x_jf_1+f_2+g)\bdot (x_kf_1+f_2-g)\in \Upsilon(G)$, for some distinct $j,\,k\in [1,m]$, then $g\in \la f_1\ra$.

 \end{itemize}
 \end{lemma}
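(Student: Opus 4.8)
The plan is to exploit the rigidity of membership in $\Upsilon(G)$: a sequence lies in $\Upsilon(G)$ precisely when there is a basis $(e_1,e_2)$ so that $2m-2$ of the terms split as $e_1^{[m-1]}$ together with $m-1$ terms lying in the coset $\langle e_1\rangle + e_2$ whose ``$e_1$-coordinates'' sum to $1 \bmod m$ over all $m$ coset-terms, with $e_1$ (or its analogue in the $\Upsilon_{nu}$ case) being a term of multiplicity $m-1$. In each of the three parts we perform a small perturbation that replaces two terms $a, b$ of $S$ by $a+g, b-g$; the total sum is preserved, so $S'$ is again a zero-sum sequence of length $2m-1$. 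The content of the lemma is: if $S'$ happens to land back in $\Upsilon(G)$, then $g$ is forced into a very restricted set, and in fact $S' = S$. Throughout I would reduce modulo $\langle f_1\rangle$ via $\varphi\colon G \to G/\langle f_1\rangle \cong \Z/m\Z$, since $\varphi(S)$ has the known form: $m-1$ zeros (from $f_1^{[m-1]}$) and $m$ copies of the generator $\varphi(f_2)$, which is the unique (up to the known classification) minimal zero-sum of length $m$ in $\Z/m\Z$.

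**Part 1.** Here $S' = f_1^{[-2]}\bdot S \bdot (f_1+g)\bdot (f_1-g)$. Applying $\varphi$, we get $\varphi(S') = \varphi(f_1)^{[-2]}\bdot \varphi(S)\bdot \varphi(g)\bdot \varphi(-g)$; since $\varphi(f_1)=0$ this is $\varphi(S)\bdot\varphi(g)\bdot\varphi(-g)$, so $\varphi(S')$ consists of $m-1$ copies of the generator $c:=\varphi(f_2)$ together with the pair $\varphi(g), -\varphi(g)$. Because $S' \in \Upsilon(G)$, there is a basis $(e_1,e_2)$ with respect to which $S'$ has the standard form; reducing mod $\langle e_1\rangle$, the sequence $\varphi'(S')$ (where $\varphi'$ is reduction mod $\langle e_1\rangle$) must be $m$ copies of a single generator plus $m-1$ zeros. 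I would first argue that the height-$(m-1)$ term of $S'$ must be $f_1$ again: $S'$ contains $f_1^{[m-3]}$ plus possibly more copies of $f_1$ coming from $f_1\pm g$ (when $g=0$ there are $m-1$ of them), while all other terms lie in the single coset $\langle f_1\rangle + f_2$ or are $f_1 \pm g$; a counting argument on multiplicities (using $m\ge 4$, so $m-3\ge 1$) pins down that no element other than $f_1$ can reach multiplicity $m-1$ unless $g$ is a multiple of $f_1$, and then the coordinate-sum congruence $\equiv 1 \bmod m$ forces $g=0$. The cleanest route: if $e_1 \neq f_1$, then $\langle e_1\rangle$ and $\langle f_1\rangle$ are distinct order-$m$ subgroups meeting only in $0$ (when $m$ is prime; in general one must be slightly careful), so the $m-1$ copies of $f_1$ in $S'$ (present when $g\in\langle f_1\rangle$, forced otherwise by the multiplicity count) cannot all map to $0$ under $\varphi'$ nor all to the same nonzero generator, contradiction. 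Hence $e_1$ generates the same coset structure as $f_1$, one concludes $g \in \langle f_1\rangle$, then $g = 0$ from the congruence, and finally $S'=S$ is immediate.

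**Parts 2 and 3 and the main obstacle.** Parts 2 and 3 follow the same skeleton but with the perturbed terms lying (partly) in the coset $\langle f_1\rangle + f_2$, so reducing mod $\langle f_1\rangle$ is less informative and one must instead track both coordinates. In Part 2, one perturbed term is $f_1 + g$ and the other is $x_j f_1 + f_2 - g$; writing $g = af_1 + bf_2$, reduction mod $\langle f_1\rangle$ shows $\varphi(S')$ has $m-2$ copies of $c$, one copy of $bc$, and one of $(1-b)c$, and for this to be the standard form ($m$ copies of one generator, rest zero) we need $\{b, 1-b\} \subseteq \{0, 1\}$ as residues generating appropriately — forcing $b \in \{0,1\}$. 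The case $b=0$ gives $g \in \langle f_1\rangle$, and then examining the first coordinate and the coordinate-sum congruence forces $g = 0$; the case $b = 1$ gives $g = af_1 + f_2$, and matching $x_j f_1 + f_2 - g = (x_j - a)f_1$ against the requirement that this new $\langle f_1\rangle$-term participate correctly forces $a = x_j - 1$, i.e. $g = (x_j-1)f_1 + f_2$, and one then checks $S' = S$ (the new term $f_1 + g = x_j f_1 + f_2$ replaces the removed $x_j f_1 + f_2$, and $(x_j - a)f_1 = f_1$ replaces the removed $f_1$ — so literally nothing changed). Part 3 is the case where both perturbed terms start in $\langle f_1\rangle + f_2$; then $\varphi(S')$ still has all $m$ coset-terms mapping to $c$ provided the $f_2$-component of $g$ is zero, i.e. $g \in \langle f_1\rangle$, and the claim is exactly that — no further rigidity is asserted, so this part should be the easiest. \textbf{The main obstacle} I anticipate is handling composite $m$ carefully: when $m$ is not prime, two distinct order-$m$ subgroups of $(\Z/m\Z)^2$ can have nontrivial intersection, so the clean ``distinct lines meet only at $0$'' argument fails and one must argue instead via the precise multiplicity bookkeeping — counting how many terms of $S'$ can possibly coincide and invoking $m \ge 4$ to guarantee enough ``untouched'' copies of $f_1$ (namely $m-3 \ge 1$, and in the relevant sub-cases $m - 2$) survive to force the basis element to be $f_1$ up to the coset ambiguity already built into Property A. Once the basis is pinned down, all three parts reduce to linear bookkeeping in the two coordinates plus the single congruence $\sum x_i \equiv 1 \bmod m$.
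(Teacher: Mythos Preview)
The paper does not prove this lemma; it is quoted verbatim from \cite{propB-GGG} (as Lemma~3.1 there) and used as a black box in the proofs of Lemma~\ref{lemma-casen-gen} and Proposition~\ref{PropBFix}. There is therefore no proof in the present paper to compare your attempt against.

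That said, a few remarks on your sketch. The overall strategy---locate the multiplicity-$(m-1)$ term $e_1$ of $S'$, argue it must coincide with $f_1$, and then read off the constraint on $g$ from the coset structure---is the natural one and is essentially how the original argument in \cite{propB-GGG} proceeds. Your bookkeeping in Part~1 has a slip: $\varphi(S')$ contains $m$ copies of $c=\varphi(f_2)$ (coming from the unchanged terms $x_if_1+f_2$), not $m-1$, together with $m-3$ zeros and the pair $\varphi(g),\,-\varphi(g)$; removing two copies of $\varphi(f_1)=0$ is not the identity operation on the sequence. This does not damage the argument, but it should be corrected. More substantively, your reduction-mod-$\langle f_1\rangle$ step is less decisive than you suggest: the real work is the multiplicity count you outline afterward, namely that $\mathsf v_{f_1}(S')=m-3$ unless some of $f_1\pm g$ equal $f_1$, and that no element of $\langle f_1\rangle+f_2$ can reach multiplicity $m-1$ in $S'$ without forcing the remaining $f_1$'s (of which there are $m-3\ge 1$) to lie in a single $\langle e_1\rangle$-coset together with elements of $\langle f_1\rangle+f_2$, which yields a contradiction once you use that $\supp(S)$ contains at least two distinct elements of $\langle f_1\rangle+f_2$. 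You correctly identify that the composite-$m$ case requires this multiplicity argument rather than any ``lines meet only at zero'' shortcut. For Parts~2 and~3 your reduction to $b\in\{0,1\}$ (writing $g=af_1+bf_2$) is the right move, and the verification that $S'=S$ in each case is straightforward once $g$ is pinned down.
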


 \begin{lemma}\label{lemma-pertub-II(weak)}
 Let $m\geq 4$, let $G=(\Z/m\Z)^2$, let $g\in G$ and suppose $$S=f_1^{[m-1]}\bdot f_2^{[m-1]}\bdot (f_1+f_2)\in \Upsilon_{nu}(G),$$ with $(f_1,f_2)$ a basis for $G$.
 \begin{itemize}
\item[1.] If $S'=f_1^{[-2]}\bdot S\bdot (f_1+g)\bdot (f_1-g)\in \Upsilon(G)$, then $g\in \la f_2\ra$.
\item[2.] If $S'=f_2^{[-2]}\bdot S\bdot (f_2+g)\bdot (f_2-g)\in \Upsilon(G)$, then $g\in \la f_1\ra$.
\item[3.] If $S'=f_1^{[-1]}\bdot f_2^{[-1]}\bdot S\bdot (f_1+g)\bdot (f_2-g)\in \Upsilon(G)$, then $g\in \{0,\, -f_1+f_2\}$ and $S'=S$.
\item[4.] If $S'=f_1^{[-1]}\bdot (f_1+f_2)^{[-1]}\bdot S\bdot (f_1+g)\bdot (f_1+f_2-g)\in \Upsilon(G)$,  then
    $g\in \la f_2\ra$.
\item[5.]If $S'=f_2^{[-1]}\bdot (f_1+f_2)^{[-1]}\bdot S\bdot (f_2+g)\bdot (f_1+f_2-g)\in \Upsilon(G)$,  then
    $g\in \la f_1\ra$.
 \end{itemize}
 \end{lemma}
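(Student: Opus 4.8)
The plan is to prove each of the five statements by the same mechanism: a sequence $S'\in\Upsilon(G)$ of length $2m-1$ must, by the characterization \eqref{char-propB}, contain a term of multiplicity $m-1$, and once we locate such a term we know its $\la\cdot\ra$-reduction kills all but one coset. So the first step in every case is to examine the multiset $S'$ obtained from $S=f_1^{[m-1]}\bdot f_2^{[m-1]}\bdot(f_1+f_2)$ by the indicated two-term perturbation $x\bdot y\rightsquigarrow (x+g)\bdot(y-g)$, and ask which terms can still have multiplicity $m-1$. Since $m\ge 4$, removing two terms from the block $f_1^{[m-1]}$ or $f_2^{[m-1]}$ leaves a block of size $m-3\ge 1$, and the two newly inserted terms $f_1\pm g$ (or whatever the case dictates) contribute at most two further copies of any single value; so in each case there is a short finite list of candidates for which element of $G$ realizes the required multiplicity $m-1$, and typically it is forced (e.g.\ in parts 1 and 2 it must be $f_2$, resp.\ $f_1$, unless $g$ is special).

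Next, having pinned down a basis element $h\in\supp(S')$ with $\vp_h(S')=m-1$, I would apply the reduction homomorphism $\varphi:G\to G/\la h\ra\cong\Z/m\Z$. By the paragraph in the introduction recalling \cite[Theorem 11.1]{Gr-book}, the nonzero part of $\varphi(S')$ is $(\text{generator})^{[m]}$, i.e.\ \emph{all} terms of $S'$ outside $\la h\ra$ lie in a single coset $c+\la h\ra$ and each projects to the same generator of $G/\la h\ra$. This is a strong constraint: writing out the images under $\varphi$ of the perturbed terms, one gets linear congruences modulo $m$ forcing $g$ into a prescribed coset. For instance in part 1, with $h=f_2$, the terms $f_1+g$ and $f_1-g$ must have equal image in $G/\la f_2\ra$, i.e.\ $\varphi(g)=-\varphi(g)$, which after combining with the requirement that they lie in the same coset as the surviving $f_1$'s (and the coset count being $\equiv 0$) yields $2\varphi(g)=0$ and in fact $\varphi(g)=0$, that is $g\in\la f_2\ra$; the analogous bookkeeping gives parts 2, 4, 5. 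For the "mixed" part 3, where one $f_1$ and one $f_2$ are perturbed to $f_1+g$ and $f_2-g$ and no multiplicity is disturbed in the bulk, I expect both $f_1$ and $f_2$ to retain multiplicity $m-1$, so $S'\in\Upsilon_{nu}(G)$; then $f_1+g,\,f_2-g\in\{f_1,f_2,f_1+f_2\}$ as a multiset, and checking the few combinatorial possibilities gives $g\in\{0,\,-f_1+f_2\}$ with $S'=S$ in either case.

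The main obstacle is the case analysis when the natural candidate for the height-$(m-1)$ term is \emph{not} forced — the exceptional small-$g$ branches. In parts 1, 2, 4, 5 one must rule out (or absorb) the scenario where the two inserted terms coincide with each other or with a surviving bulk term in a way that creates a \emph{new} element of multiplicity $m-1$ different from the expected $f_1$ or $f_2$; because $m\ge 4$ this can only happen for very specific $g$, and each such $g$ must then be checked directly to see whether the resulting $S'$ is genuinely in $\Upsilon(G)$ and whether it already satisfies the claimed conclusion. I would handle this by noting that two inserted terms plus the $m-3$ leftover copies can reach multiplicity $m-1$ only if $m-3\le 2$, i.e.\ $m\le 5$, or if the inserted terms reinforce the opposite bulk block, each giving a finite, boundedly small list of $g$'s to verify. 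Part 3 is the easiest (it is essentially a finite check since nothing in the bulk moves), while parts 4 and 5 — perturbing a bulk element against the singleton $f_1+f_2$ — are the most delicate, since there the singleton's position in \eqref{char-propB} as the unique "extra" coset representative interacts with whichever of $f_1,f_2$ survives at multiplicity $m-1$; I would treat them by symmetry, proving part 4 in full and remarking that part 5 follows by exchanging the roles of $f_1$ and $f_2$.
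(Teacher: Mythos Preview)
The paper does not prove this lemma: it is quoted verbatim as Lemma 3.2 from \cite{propB-GGG} and used as a black box, so there is no in-text argument to compare against. Your plan---identify which term of $S'$ can carry multiplicity $m-1$ (using $m\ge 4$ to keep the candidate list to $\{f_1,f_2\}$ plus finitely many exceptional $g$'s), then invoke the coset constraint $\supp(S')\setminus\{e_1\}\subseteq\la e_1\ra+e_2$---is correct and is the standard way such perturbation lemmas are proved.

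Two small corrections. First, your sketch of part 3 is phrased backwards: after removing one $f_1$ and one $f_2$ the bulk multiplicities drop to $m-2$, so $f_1$ and $f_2$ do \emph{not} automatically retain multiplicity $m-1$; rather, the hypothesis $S'\in\Upsilon(G)$ forces some term to have multiplicity $m-1$, and a direct count shows this is possible only when $g\in\{0,-f_1+f_2\}$, which then gives $S'=S$. Second, your bound ``$m-3\le 2$, i.e.\ $m\le 5$'' in the obstacle paragraph is garbled: the relevant count is that any element other than $f_1,f_2$ draws at most two copies from the inserted pair and at most one from the singleton $f_1+f_2$, so its multiplicity is $\le 3<m-1$ for all $m\ge 5$, and the case $m=4$ is dispatched by the basis condition (e.g.\ $f_1+f_2\ne 0$, $2f_2\ne 0$). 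Neither point is a real gap; once tightened, your argument goes through.
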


 \begin{lemma}\label{lemma-pertub-III(strong)}
 Let $m\geq 4$, let $G=(\Z/m\Z)^2$, let $g\in G$ and suppose $$S=f_1^{[m-1]}\bdot f_2^{[m-1]}\bdot (f_1+f_2)\in \Upsilon_{nu}(G),$$ with $(f_1,f_2)$ a basis for $G$.
 \begin{itemize}
\item[1.] If $S'=f_1^{[-2]}\bdot S\bdot (f_1+g)\bdot (f_1-g)\in \Upsilon_{nu}(G)$, then $g=0$ and $S'=S$.
\item[2.] If $S'=f_2^{[-2]}\bdot S\bdot (f_2+g)\bdot (f_2-g)\in \Upsilon_{nu}(G)$, then $g=0$ and $S'=S$.
\item[3.] If $S'=f_1^{[-1]}\bdot f_2^{[-1]}\bdot S\bdot (f_1+g)\bdot (f_2-g)\in \Upsilon_{nu}(G)$, then $g\in \{0,\, -f_1+f_2\}$ and $S'=S$.
\item[4.] If $S'=f_1^{[-1]}\bdot (f_1+f_2)^{[-1]}\bdot S\bdot (f_1+g)\bdot (f_1+f_2-g)\in \Upsilon_{nu}(G)$,  then
    $g\in \{0,\, f_2\}$ and $S'=S$.
\item[5.]If $S'=f_2^{[-1]}\bdot (f_1+f_2)^{[-1]}\bdot S\bdot (f_2+g)\bdot (f_1+f_2-g)\in \Upsilon_{nu}(G)$,  then
    $g\in \{0,\,f_1\}$ and $S'=S$.
 \end{itemize}
 \end{lemma}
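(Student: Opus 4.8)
The plan is to derive each part from the corresponding (weaker) conclusion already established in Lemma \ref{lemma-pertub-II(weak)}, using the extra hypothesis $S'\in \Upsilon_{nu}(G)$ rather than merely $S'\in\Upsilon(G)$. So for part~1, I would start from Lemma \ref{lemma-pertub-II(weak)}.1, which tells me $g\in\la f_2\ra$, say $g=tf_2$ with $t\in[0,m-1]$; the resulting sequence is $S'=f_1^{[m-3]}\bdot f_2^{[m-1]}\bdot(f_1+tf_2)\bdot(f_1-tf_2)\bdot(f_1+f_2)$. The task is to show that the \emph{only} value of $t$ for which this lies in $\Upsilon_{nu}(G)$ — i.e.\ has the shape $h_1^{[m-1]}\bdot h_2^{[m-1]}\bdot(h_1+h_2)$ for some basis $(h_1,h_2)$ — is $t=0$. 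I would argue via the height: in $\Upsilon_{nu}(G)$ there are exactly two elements of multiplicity $m-1$, and $f_2$ already has multiplicity $m-1$ in $S'$ (using $m\ge 4$ so no other term can coincidentally inflate it), so one of the two ``heavy'' basis elements must be $f_2$, and the other heavy element must account for the remaining $m-1$ copies of $f_1$ among the terms $f_1^{[m-3]}\bdot(f_1+tf_2)\bdot(f_1-tf_2)$. Since these are $m-3$ copies of $f_1$ plus two further terms, getting a common value repeated $m-1\ge 3$ times forces $f_1+tf_2=f_1-tf_2=f_1$, hence $2tf_2=0$ and $tf_2=0$, i.e.\ $g=0$; once $g=0$ one checks directly $S'=S$. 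The analogous argument, with the roles of $f_1$ and $f_2$ interchanged, handles part~2.

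For parts~3,~4,~5 I would proceed the same way, invoking the matching items of Lemma \ref{lemma-pertub-II(weak)} to pin $g$ down to a one-parameter family and then using membership in $\Upsilon_{nu}(G)$ to eliminate all but the finitely many listed values. In part~3, Lemma \ref{lemma-pertub-II(weak)}.3 already gives $g\in\{0,-f_1+f_2\}$ and $S'=S$ outright, so nothing new is needed — the conclusion is literally the same, and one just records that $\Upsilon_{nu}(G)\subseteq\Upsilon(G)$ makes the hypothesis stronger, not weaker. In part~4, Lemma \ref{lemma-pertub-II(weak)}.4 gives $g\in\la f_2\ra$, say $g=tf_2$, producing $S'=f_1^{[m-2]}\bdot f_2^{[m-1]}\bdot(f_1+tf_2)\bdot(f_1+(1-t)f_2)$; again $f_2$ is a heavy element, and the other heavy element must be obtained from the $m-2$ copies of $f_1$ together with the two terms $f_1+tf_2$, $f_1+(1-t)f_2$. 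Matching multiplicities forces at least one of these two terms to equal $f_1$, i.e.\ $t\in\{0,1\}$; in the case $t=0$ one gets $S'=S$, while $t=1$ gives the sequence with heavy elements $f_2$ and $f_1+f_2$, corresponding to $g=f_2$. Part~5 is symmetric under exchanging $f_1\leftrightarrow f_2$.

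The routine verifications are the bookkeeping that, in each case, after substituting the surviving values of $g$ the sequence really does coincide with $S$ (for the $g=0$ branches) or really does land in $\Upsilon_{nu}(G)$ with the stated heavy element (for the exceptional branches), and that no term collisions among the remaining low-multiplicity terms secretly create a third element of multiplicity $m-1$ — this is where the hypothesis $m\ge 4$ is used, since for small $m$ a term like $f_1+f_2$ could coincide with one of the perturbed terms. The main obstacle is making the ``heavy-element'' dichotomy rigorous: I must argue that any sequence in $\Upsilon_{nu}(G)$ has its two multiplicity-$(m-1)$ elements \emph{forming a basis}, and that when one of them is forced to be $f_2$ (respectively $f_1$) the quotient by $\la f_2\ra$ (respectively $\la f_1\ra$) collapses the perturbed terms onto a single residue, which is exactly the constraint $2tf_2=0$ or $t(1-t)\equiv$ (appropriate condition) that isolates the listed $g$. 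Once this structural fact about $\Upsilon_{nu}(G)$ is in hand, each of the five parts is a short deduction.
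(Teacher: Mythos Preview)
The paper does not actually prove this lemma: it is stated (together with Lemmas~\ref{lemma-pertub-I} and~\ref{lemma-pertub-II(weak)}) as a quotation of Lemmas~3.1--3.3 from \cite{propB-GGG}, with no proof given here. So there is no ``paper's own proof'' to compare your proposal against.

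That said, your strategy---deduce each item from the corresponding item of Lemma~\ref{lemma-pertub-II(weak)} and then use the stronger hypothesis $S'\in\Upsilon_{nu}(G)$ (support of size at most three, two elements of multiplicity exactly $m-1$) to pin down the remaining parameter---is sound and is the natural approach. Two small slips are worth flagging. In Part~1, after removing the block $f_2^{[m-1]}$ you are left with the $m$ terms $f_1^{[m-3]}\bdot(f_1+tf_2)\bdot(f_1-tf_2)\bdot(f_1+f_2)$, not the $m-1$ terms you wrote: the term $f_1+f_2$ must also be accounted for. This does not break the argument (one rules out $h_2=f_1-f_2$ by observing that $f_1+f_2$ would then lie outside $\{f_2,f_1-f_2,f_1\}=\supp(S')$, using $m\ge 4$), but it should be said. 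In Part~4 with $t=1$ you claim the heavy elements are $f_2$ and $f_1+f_2$; in fact $f_1+(1-t)f_2=f_1$, so $S'=f_1^{[m-1]}\bdot f_2^{[m-1]}\bdot(f_1+f_2)=S$ with heavy elements $f_1$ and $f_2$, exactly matching the asserted conclusion $S'=S$. Neither slip affects the validity of your overall plan.
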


\section{The Missing Case}\label{sec-k=n-1}

The proof of \cite[Proposition 4.2]{propB-GGG} utilizes only that $S$ is a long zero-sum with $0\notin \Sigma_{\leq n-1}(S)$. As such, it does not account for the possibility given in Lemma \ref{lemma-casen-gen}.2, which is the sole example of a sequence not satisfying Property A that meets the requisite hypotheses.  
Specifically, the sequence given in Lemma \ref{lemma-casen-gen}.2 falls under Case 2.4 in the proof of \cite[Proposition 4.2]{propB-GGG} with $W_0=e_1^{[n-1]}\bdot (xe_1+e_2)^{[x^*]}\bdot e_2^{[n-x^*]}$ and $W_i=(xe_1+2e_2)\bdot (xe_1+e_2)^{[n-1-x^*]}\bdot e_2^{[x^*-1]}\bdot e_1$, where $xx^*\equiv 1\mod n$,  and there the problem lies in that the existence of $\nu'$ is not guaranteed as claimed in the proof.
We correct the proof and statement of
\cite[Proposition 4.2]{propB-GGG} in Lemma \ref{lemma-casen-gen} below,
which we prove using an  inductive argument
(with Lemma \ref{lemma-casen} serving as the base of the induction) rather than by the massive case analysis employed in \cite{propB-GGG}. The simplified argument makes it more evident where the missing case emerges from the argument.

\begin{lemma}\label{lemma-casen} Let $n\geq 2$,  let $G=(\Z/n\Z)^2$, and suppose Property B holds for $G$. If   $S\in\Fc(G)$  is a zero-sum sequence with $|S|=3n-1$ and $0\notin \Sigma_{\leq n-1}(S)$,  then  there is a basis $(e_1,e_2)$ for $G$ such that either
\begin{itemize}
\item[1.] $\supp(S)\subseteq \{e_1\}\cup \big(\la e_1\ra+e_2\big)$ and $\vp_{e_1}(S)\equiv -1\mod n$, or
  \item[2.] $S=e_1^{[n]}\bdot e_2^{[n-1]}\bdot (xe_1+e_2)^{[n-1]} \bdot (xe_1+2e_2)$ for some $x\in [2,n-2]$ with $\gcd(x,n)=1$.
\end{itemize}
\end{lemma}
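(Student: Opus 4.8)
The plan is to extract a term of high multiplicity from $S$ and reduce modulo the cyclic subgroup it generates, then use Property B (via the known structure of long minimal zero-sum sequences in $\Z/n\Z$) together with the perturbation lemmas. First I would dispose of the trivial base cases $n=2,3$ by direct inspection, so assume $n\ge 4$. The key opening observation is that $|S|=3n-1 > \mathsf s_{\le n}(G)=3n-2$, so $S$ does contain a nontrivial zero-sum subsequence of length at most $n$; combined with $0\notin\Sigma_{\le n-1}(S)$, this forces $S$ to contain a zero-sum subsequence $T$ with $|T|=n$ \emph{exactly}. Then $S\bdot T^{[-1]}$ is a zero-sum sequence of length $2n-1$, and I would want to argue it is a \emph{minimal} zero-sum: any splitting of $S$ into two proper nontrivial zero-sum pieces, one of which we could take to have length $\le n$, would have to have that piece of length exactly $n$ by the hypothesis on $\Sigma_{\le n-1}$, and one then analyzes the at most two ways this can happen. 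Granting minimality, Property B applies to $S\bdot T^{[-1]}$, giving a basis $(e_1,e_2)$ and the explicit form \eqref{char-propB}.

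Next I would pin down the structure of $T$ relative to this basis. Writing $\varphi:G\to G/\la e_1\ra\cong\Z/n\Z$, the sequence $\varphi(S\bdot T^{[-1]})$ consists of $n-1$ zeros together with an order-$n$ element $\bar f$ repeated $n$ times with a prescribed sum, and $\varphi(S)$ must itself be a zero-sum of length $3n-1$ in $\Z/n\Z$ with no zero-sum subsequence shorter than... — here I would use the known fine structure of zero-sum sequences in $\Z/n\Z$ of length close to $3n$ to force $\varphi(T)$, hence $T$, into one of a small number of shapes. The main casework is whether $T$ is supported on $\{e_1\}\cup(\la e_1\ra + e_2)$ (in which case concatenating back with $S\bdot T^{[-1]}$ keeps everything in Property A and one checks the multiplicity congruence $\vp_{e_1}(S)\equiv -1$, landing in conclusion 1), or whether $T$ uses a term outside that union, say a term in $\la e_1\ra + 2e_2$. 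In the latter situation I would set up $S$ as $S'\bdot(\text{two perturbed terms})$ for $S'=S\bdot T^{[-1]}\in\Upsilon(G)$ and apply Lemmas \ref{lemma-pertub-I}, \ref{lemma-pertub-II(weak)}, \ref{lemma-pertub-III(strong)} to constrain the perturbation $g$; the rigidity they provide should collapse the possibilities to exactly the exceptional sequence $e_1^{[n]}\bdot e_2^{[n-1]}\bdot(xe_1+e_2)^{[n-1]}\bdot(xe_1+2e_2)$, with the coprimality $\gcd(x,n)=1$ and the range $x\in[2,n-2]$ forced by $0\notin\Sigma_{\le n-1}(S)$ (e.g.\ $x\in\{1,n-1\}$ would create a short zero-sum, and $\gcd(x,n)=d>1$ would let one combine $\sim n/d$ copies of $xe_1+e_2$ with copies of $e_1$).

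The inductive structure alluded to in the surrounding text suggests an alternative organization: rather than invoking the $\Z/n\Z$ structure theorem as a black box, one removes a well-chosen term from $S$, applies the statement of Lemma \ref{lemma-casen-gen} (or its $\Sigma_{\le n-1}$-analogue) at one lower length, and then adds the term back, tracking how the basis and the exceptional shape propagate. I expect the main obstacle to be exactly the bookkeeping at the interface between the two conclusions — ruling out ``mixed'' configurations where $S\bdot T^{[-1]}$ has Property A with respect to $(e_1,e_2)$ but $S$ does not have Property A with respect to \emph{any} basis, \emph{except} for the one genuine exception in conclusion 2. This is precisely the spot where the original \cite[Proposition 4.2]{propB-GGG} argument broke down (the claimed element $\nu'$ need not exist), so the perturbation lemmas must be applied with care to all of the relevant cases $W_0, W_i$ rather than assuming a single canonical splitting; verifying that conclusion 2's sequence really does satisfy $0\notin\Sigma_{\le n-1}(S)$ and really fails Property A is a short but essential consistency check.
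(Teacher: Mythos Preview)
Your opening matches the paper: any decomposition $S=W_0\bdot W_1$ with $|W_1|=n$ has $W_0$ a minimal zero-sum of length $2n-1$, and Property~B gives a basis $(e_1,e_2)$ with $W_0$ in the form \eqref{char-propB}.

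After that there is a genuine gap. The perturbation lemmas (Lemmas \ref{lemma-pertub-I}--\ref{lemma-pertub-III(strong)}) replace exactly \emph{two} terms of a length-$(2m-1)$ sequence in $\Upsilon(G)$ and test whether the result lies again in $\Upsilon(G)$; they give no information about adjoining an $n$-term block to $W_0$, so ``set up $S$ as $S'\bdot(\text{two perturbed terms})$ with $S'=S\bdot T^{[-1]}$'' is simply not available --- here $T$ has $n$ terms, not two. (In the paper those lemmas appear only in Proposition~\ref{PropBFix}, where the relevant swaps between blocks really are two-term exchanges inside $\ker\varphi\cong(\Z/m\Z)^2$.) The quotient $\varphi:G\to G/\la e_1\ra$ does not help either: the hypothesis $0\notin\Sigma_{\le n-1}(S)$ does not descend to $\varphi(S)$, since zero-sums in the quotient need not lift, so the ``fine structure of zero-sum sequences in $\Z/n\Z$ of length close to $3n$'' you invoke gives no constraint on $T$. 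Your alternative of invoking Lemma~\ref{lemma-casen-gen} at a lower length is circular, as the present lemma is its base case.

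The tool you are missing is Property~C (implied by Property~B), and the paper's argument is organized around it via a case split on $\h(S)\in\{\ge n+1,\,n,\,n-1\}$. In the nontrivial cases one repeatedly chooses subsequences $T'\mid S$ with $|T'|=3n-3$ (or $3n-2$): either $T'$ already has the form \eqref{PropC-char}, from which one reads off a linear relation among the support elements, or Property~C (resp.\ $\mathsf s_{\le n}(G)=3n-2$) forces an $n$-term zero-sum $W'_1\mid T'$, producing a \emph{new} block decomposition $S=W'_0\bdot W'_1$ with prescribed elements forced into $\supp(W'_0)$. Since only terms of multiplicity $\ge n-1$ in $S$ can serve as $e'_1$ in Property~A for $W'_0$, this pins down coordinates. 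Item~2 emerges exactly in the subcase $\h(S)=n$ with $W_0=e_2^{[n-1]}\bdot e_3^{[n-1]}\bdot(e_2+e_3)\in\Upsilon_{nu}(G)$: Property~C applied to $T'=e_1^{[n-1]}\bdot e_2^{[n-1]}\bdot e_3^{[n-1]}$ (when $0\notin\Sigma_{\le n}(T')$) yields $e_3-e_2\in\la e_1\ra$ and hence the shape in Item~2. Incidentally, the restriction $x\in[2,n-2]$ is not because $x\in\{1,n-1\}$ produces a short zero-sum, but because those values already land in Item~1 under the basis $(e_2,e_1)$ or $(xe_1+e_2,e_2)$.
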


\begin{proof}
By hypothesis, Property B holds for $G$, which implies Property C does as well, as discussed in Section \ref{sec-intro}.
Since $0\notin \Sigma_{\leq n-1}(S)$ and $\mathsf s_{\leq n}(G)=\mathsf s_{\leq n}((\Z/n\Z)^2)=3n-2$ (as noted in Section \ref{sec-intro}), it follows that any sequence $T\mid S$ with $|T|\geq 3n-2$ must contain an $n$-term zero-sum subsequence. Moreover, any subsequence $T\mid S$ with $|S|=3n-3$ that does not have the form given in \eqref{PropC-char}  must also contain an $n$-term zero-sum subsequence. Since $|S|\geq 3n-1\geq 3n-2$, $S$ contains at least one $n$-term zero-sum subsequence $W_1\mid S$.
If $W_1\mid S$ is any such subsequence, then $W_0=S\bdot W_1^{[-1]}$ will be a zero-sum sequence (as $S$ and $W_1$ are both  zero-sum) of length $|S|-|W_1|=2n-1$. If $W_0$ is not minimal, then it could be factored as a product of two nontrivial zero-sum subsequences, one of which must have length at most $n-1$ by the Pigeonhole Principle.
Since this would contradict the hypothesis $0\notin \Sigma_{\leq n-1}(S)$, it follows that $W_0$ is a minimal zero-sum sequence of length $2n-1$.
As such (in view of Property B), there is  a basis $(e_1,e_2)$ for $G$ such that property A holds for $W_0$, in which case $W_0$ satisfies \eqref{char-propB}. In particular, $\mathsf h(S)\geq \vp_{e_1}(S)\geq n-1$.  We call any decomposition $S=W_0\bdot W_1$ with $W_1$ and $W_0$ zero-sum subsequences of lengths $|W_0|=2n-1$ and $|W_1|=n$ a \emph{block decomposition} of $S$. As just noted, $S$ has a block decomposition.

Suppose $n=2$ and let $S=W_0\bdot W_1$ be any block decomposition.  Then $0\notin\Sigma_{\leq n-1}(S)$ forces $W_1$ to consist of a single non-zero term repeated twice, and $W_0$ being a minimal zero-sum of length $2n-1=3$  forces $W_0$ to consist of  the three nonzero elements of $G=(\Z/2\Z)^2$ each repeated once. In such case, Item 1 holds using any basis for $G=(\Z/2\Z)^2$, allowing us to assume $n\geq 3$.

The following observation will be  used in several arguments: \be\label{useful-obs} \mbox{if $g,\,h\in \supp(S)$ with $\vp_g(S)\geq n-2$ and $h\in \la g\ra$, then $h=g$}.\ee Indeed if $h=xg$ with $x\in [2,n]$, then $g^{[n-x]}\bdot h$ would be a  zero-sum subsequence of $S$ with length $n-x+1\leq n-1$, contradicting that $0\notin \Sigma_{\leq n-1}(S)$.

\subsection*{Case 1:} $\mathsf h(S)\geq n+1$.

In this case, there is some term $e_3\in\supp(S)$ with $\vp_{e_3}(S)\geq n+1$ and there is a block decomposition $S=W_0\bdot W_1$ with $W_1=e_3^{[n]}$ and $e_3\in \supp(W_0)$. Let $(e_1,e_2)$ be a basis such that Property A holds for  $W_0$, so $\supp(W_0)\subseteq \{e_1\}\cup \big(\la e_1\ra+e_2\big)$ with $W_0$ having the form given by \eqref{char-propB}. Since $W_1=e_3^{[n]}$ with $e_3\in \supp(W_0)$, Item 1 now follows using the basis $(e_1,e_2)$.

\subsection*{Case 2:} $\mathsf h(S)=n$.

Let $e_1\in \supp(S)$ be a term with $\vp_{e_1}(S)=n$. Then $S$ has block decomposition $S=W_0\bdot W_1$ with $$W_1=e_1^{[n]}.$$ Note $$\supp(W_0)\cap \supp(W_1)=\emptyset\quad\und\quad\ord(e_1)=n$$ in view of $0\notin \Sigma_{\leq n-1}(S)$ and the case hypothesis.  As  $W_0$ is a minimal zero-sum of maximal length with Property B holding,  there is some $g\in \supp(W_0)$ with $\vp_{g}(W_0)=\vp_{g}(S)=n-1$. We handle two subcases.

\subsection*{Case 2.1:} There are distinct $e_2,e_3\in \supp(W_0)$ with $\vp_{e_2}(W_0)=\vp_{e_3}(W_0)=n-1$.

In this case, $$W_0=e_2^{[n-1]}\bdot e_3^{[n-1]}\bdot (e_2+e_3)$$ with $(e_2,e_3)$ a basis for $G$.  Let $$e_1=xe_2+ye_3$$ with $x,\,y\in [0,n-1]$.

Since $\supp(W_0)\cap \supp(W_1)=\emptyset$, we have $e_1\notin\{e_2,e_3\}$, whence \eqref{useful-obs} ensures $x,\,y\in [1,n-1]$. If $y=1$, then Item 1 holds with basis $(e_2,e_3)$. If $x=1$, then Item 1  holds with basis $(e_3,e_2)$. Therefore we can assume $$x,\,y\in [2,n-1].$$  Consider the subsequence $T=e_1^{[n-1]}\bdot e_2^{[n-1]}\bdot e_3^{[n-1]}$ of $S$.

Suppose $0\in \Sigma_{\leq n}(T)$.  Then there exists a zero-sum subsequence $W'_1\mid T$ with $|W'_1|=n$ (in view of  the hypothesis $0\notin \Sigma_{\leq n-1}(S)$), and we obtain a block decomposition $S=W'_0\bdot W'_1$.
It is impossible for a zero-sum sequence of length $n$ to have a term with multiplicity exactly $n-1$. Thus $W'_1\mid e_1^{[n-2]}\bdot e_2^{[n-2]}\bdot e_3^{[n-2]}$, ensuring that \be\label{supp4}e_1,e_2,e_3,(e_2+e_3)\in \supp(W'_0),\ee
which are all distinct elements in view of $e_1=xe_2+ye_3$ with $x,y\in [2,n-1]$.
Let $(f_1,f_2)$ be a basis such that Property A holds for $W'_0$.  As $n\geq 3$ with $\vp_{e_1+e_2}(S)=1$, we must  have $f_1\in \{e_1,e_2,e_3\}$. If $f_1=e_1$, then \eqref{supp4} and Property A imply $e_2=(e_2+e_3)-e_3\in \la e_1\ra$ and $e_3=(e_2+e_3)-e_2\in \la e_1\ra$, yielding the contradiction $G=\la e_2,e_3\ra=\la e_1\ra$ (as $G$ is not cyclic). If $f_1=e_2$, then \eqref{supp4} and Property A imply $e_1-e_3=(xe_2+ye_3)-e_3\in \la e_2\ra$, forcing $y=1$. If $f_1=e_3$, then \eqref{supp4} and Property A imply $e_1-e_2=(xe_2+ye_3)-e_2\in \la e_3\ra$, forcing $x=1$. As both these cases were already handled, we can now assume $0\notin \Sigma_{\leq n}(T)$.

Since $0\notin \Sigma_{\leq n}(T)$ with $|T|=3n-3$ and $T=e_1^{[n-1]}\bdot e_2^{[n-1]}\bdot e_3^{[n-1]}$, Property C ensures that $T$ has the form given by \eqref{PropC-char}, as remarked in Section \ref{sec-intro}. As a result, depending on which element from $\{e_1,e_2,e_3\}$ plays the role of $e_1$ in \eqref{PropC-char}, it follows that either $e_1-e_2=(xe_2+ye_3)-e_2\in \la e_3\ra$ or $e_1-e_3=(xe_2+ye_3)-e_3\in \la e_2\ra$ or $$-e_2+e_3\in \la e_1\ra=\la xe_2+ye_3\ra.$$ The first case forces $x=1$ and the second forces $y=1$, both cases handled already. Since $(e_2,e_3)$ is a basis, the final case implies $\gcd(x,n)=\gcd(y,n)=1$ with $-e_2+e_3=y^*e_1$, where $y^*\in [1,n-1]$ is the multiplicative inverse of $y$ modulo $n$, in turn forcing $y\equiv -x\mod n$. Hence, since $x,y\in [2,n-1]$, we conclude that $x,y,y^*\in [2,n-2]$. But now,    $e_3=y^*e_1+e_2$ and $e_2+e_3=y^*e_2+2e_2$.  In this case,
$S=e_1^{[n]}\bdot e_2^{[n-1]}\bdot e_3^{[n-1]}\bdot (e_2+e_3)=e_1^{[n]}\bdot e_2^{[n-1]}\bdot (y^*e_1+e_2)^{[n-1]}\bdot (y^*e_1+2e_2)$, and Item 2 follows with basis $(e_1,e_2)$, completing Case 2.1.

\subsection*{Case 2.2:} There is a unique $g\in \supp(W_0)$  with $\vp_{g}(W_0)=n-1$.

Let $(f_1,f_2)$ be basis for which  Property A  holds for $W_0$.  Since $\supp(W_0)\cap \supp(W_1)=\emptyset$, we have $\vp_{f_1}(S)=\vp_{f_1}(W_0)=n-1$, while $f_1=g$ follows by case hypothesis.  Let $$e_1=xf_1+yf_2$$ and let  $$zf_1+f_2\in \supp(W_0)\setminus \{f_1\},\quad \mbox{where $z\in [0,n-1]$},$$ be arbitrary.
Then $T=S\bdot e_1^{[-1]}\bdot (zf_1+f_2)^{[-1]}$ is a subsequence $T\mid S$ with $|T|=3n-3$ which has only two elements with multiplicity equal to $n-1$. If follows from Property C that there is a zero-sum subsequence $W'_1\mid T$ with $|W'_1|=n$ (as $0\notin \Sigma_{\leq n-1}(S)$). Let $W'_0=S\bdot (W'_1)^{[-1]}$. Then $S=W'_0\bdot W'_1$ is a block decomposition with $$e_1,\,zf_1+f_2\in \supp(W'_0).$$ Moreover, since an $n$-term zero-sum sequence cannot contain a term with multiplicity exactly $n-1=\vp_{f_1}(S)$, it follows that $$f_1\in \supp(W'_0).$$
Note that the elements $e_1$, $f_1$ and $zf_1+f_2$ are distinct in view of $\supp(W_0)\cap \supp(W_1)=\emptyset$ with $(f_1,f_2)$ a basis.
Let $(e'_1,e'_2)$ be a basis for which Property A holds for $W'_0$. By case hypothesis, $e_1$ and $f_1$ are the only terms of $S$ with multiplicity at least $n-1$.
Thus either $e'_1=e_1$ or $e'_1=f_1$.
If $e'_1=f_1$, then $e_1,\,zf_1+f_2\in \supp(W'_0)$ combined with Property A  implies $e_1-(zf_1+f_2)=(xf_1+yf_2)-(zf_1+f_2)\in \la f_1\ra$, forcing $y=1$. In such case, Item 1 holds with basis $(f_1,f_2)$. On the other hand, if $e'_1=e_1$, then $f_1, zf_1+f_2\in \supp(W'_0)$ combined with Property A implies \be\label{trunch}(zf_1+f_2)-f_1\in \la e_1\ra=\la xf_1+yf_2\ra.\ee Since $(f_1,f_2)$ is basis, \eqref{trunch} is only possible if $\gcd(y,n)=1$, implying that $$z-1\equiv xy^*\mod n,$$ where $y^*\in [1,n-1]$ is the multiplicative inverse of $y$ modulo $n$.   Since
$|\supp(W_0)\setminus \{f_1\}|\geq 2$ per the remarks given in Section \ref{sec-intro} regarding minimal zero-sums of length $2n-1$ satisfying Property A, there is another $z'f_1+f_2\in \supp(W_0)\setminus \{f_1\}$ with $z'\in [0,n-1]$ and $z'\neq z$. Repeating the previous argument using $z'f_1+f_2$ in place of $zf_1+f_2$, we likewise conclude that $z'-1\equiv xy^*\equiv z-1\mod n$. But this forces $z'=z$, contradicting that $zf_1+f_2$ and $z'f_1+f_2$ are distinct, which completes Case 2.

\subsection*{Case 3:} $\mathsf h(S)=n-1$.

Suppose there is a unique term $g\in \supp(S)$ with $\vp_{g}(S)= n-1$, let  $S=W_0\bdot W_1$ be an arbitrary block decomposition, and let $(e_1,e_2)$ be any  basis with respect to which Property A holds for  $W_0$. Then  we must have $e_1=g$ as $g$ is the unique term with $\vp_{g}(S)\geq n-1$. Let $xe_1+ye_2\in \supp(W_1)$ and  $ze_1+e_2\in \supp(W_0)\setminus \{e_1\}$ be arbitrary.
Note $\vp_{e_1}(W_0)=\vp_{e_1}(S)=n-1$ ensures $e_1\neq xe_1+ye_2$.
Then $T=S\bdot (xe_1+ye_2)^{[-1]}\bdot (ze_1+e_2)^{[-1]}$ is a subsequence $T\mid S$ with $|T|=3n-3$ containing exactly one term with multiplicity $n-1$. It thus follows from Property C that there is a zero-sum subsequence $W'_1\mid T$ with $|W'_1|=n$. Setting $W'_0=S\bdot (W'_1)^{[-1]}$, we see that $S=W'_0\bdot W'_1$ is a block decomposition having $(xe_1+ye_2),\, (ze_1+e_2)\in \supp(W'_0)$.
Letting $(f_1,f_2)$ be a basis for which Property A holds for $W'_0$, it follows that $f_1=g=e_1$ as remarked earlier, and thus $(xe_1+ye_2)-(ze_1+e_2)\in \la e_1\ra$.
Hence $y=1$, and as $xe_1+ye_2\in \supp(W_1)$ was arbitrary, it follows that
$\supp(S)\subseteq \{e_1\}\cup \big(\la e_1\ra+e_2\big)$, yielding Item 1 with basis $(e_1,e_2)$.  So we can now assume there are at least two distinct elements from $\supp(S)$ having multiplicity $n-1$ in $S$.

Let $S=W_0\bdot W_1$ be a block decomposition of $S$ and let $(e_1,e_2)$ be a basis with respect to which Property A  holds for $W_0$. Then $\vp_{e_1}(S)=n-1$ and $$\supp(W_0)\subseteq e_1\cup \big(\la e_1\ra+e_2\big).$$ Since an $n$-term zero-sum cannot have a term with multiplicity exactly $n-1$,  it follows that any term $g\in \supp(S)$ with $\vp_{g}(S)=n-1$ must have $g\in \supp(W_0)$, and thus $g=e_1$ or $g=ze_1+e_2$ for some $z\in [0,n-1]$. As there are at least two terms of $S$ multiplicity $n-1$, there is at least one such term of the form $g=ze_1+e_2$, and then by replacing the basis $(e_1,e_2)$ with the basis $(e_1,ze_1+e_2)$, we can assume the basis $(e_1,e_2)$ for which
Property A holds for $W_0$ has  $\vp_{e_2}(S)=n-1$ (though we are \emph{not} assured that $\vp_{e_2}(W_0)=n-1$). We handle two further subcases.

\subsection*{Case 3.1:}  There is a third term $e_3=ze_1+e_2$ with $z\in [1,n-1]$ that also has $\vp_{e_3}(S)=n-1$.

In this case, since $S$ is zero-sum, we have $$S=e_1^{[n-1]}\bdot e_2^{[n-1]}\bdot (ze_1+e_2)^{[n-1]}\bdot (xe_1+ye_2)\bdot ((1-x+z)e_1+(2-y)e_2),$$ for some $x,y\in [0,n-1]$. If $y=1$, then $2-y=1$ and  Item 1 holds with basis $(e_1,e_2)$.
In view of $\vp_{e_1}(S)=n-1$ and \eqref{useful-obs}, we cannot have $y=0$. Therefore we can assume $$y\in [2,n-1].$$ If there were a fourth term with multiplicity $n-1$ in $S$, then this would require $n=3$, so that $y\in [2,n-1]$ forces $y=2$, and would also require $(xe_1+ye_2)=(1-x+z)e_1+(2-y)e_2$, so that
$2=y\equiv 2-y= 0\mod n$, which is not possible. Therefore we find that $e_1$, $e_2$ and $ze_1+e_2$ are the only terms with multiplicity $n-1$ in $S$.

Consider  $T=S\bdot (xe_1+ye_2)^{[-1]}$. Since $|T|=3n-2$, there is a zero-sum subsequence $W'_0\mid T$ with $|W'_0|=n$. Then $S=W'_0\bdot W'_1$ is a block decomposition, where $W'_0=S\bdot (W'_1)^{[-1]}$, with \be\label{star}e_1,e_2,ze_1+e_2,xe_1+ye_2\in \supp(W'_0).\ee Note $e_1,e_2,ze_1+e_2\in \supp(W'_0)$ since the $n$-term zero-sum $W'_1$ cannot contain a term with multiplicity exactly $n-1$.
If $(e'_1,e'_2)$ is a basis for which Property A holds for $W'_0$, then we must have $e'_1\in \{e_1,e_2,ze_1+e_2\}$ as these are the only terms with multiplicity $n-1$ as noted earlier in the subcase.
If $e'_1=e_1$, then Property A and \eqref{star} imply $(xe_1+ye_2)-(ze_1+e_2)\in \la e_1\ra$, yielding the contradiction  $y=1$. If $e'_1=e_2$, then Property A and \eqref{star} imply $(xe_1+ye_2)-(ze_1+e_2)\in \la e_2\ra$ and $(ze_1+e_2)-e_1\in \la e_2\ra$, whence $x=z=1$, and now Item 1 holds with basis $(e_2,e_1)$. Finally,  if $e'_1=ze_1+e_2$, then Property A and \eqref{star} imply $-e_1+e_2\in \la ze_1+e_2\ra$ and $(xe_1+ye_2)-e_2\in \la  ze_1+e_2\ra$, whence $z=n-1$ and $x\equiv (y-1)z\equiv 1-y\mod n$.
In this case, $$S=e_1^{[n-1]}\bdot e_2^{[n-1]}\bdot (-e_1+e_2)^{[n-1]}\bdot (xe_1+(1-x)e_2)\bdot ( -xe_1+(x+1)e_2),$$ and now Item 1 holds with basis $(-e_1+e_2,e_2)$, completing the subcase.

\subsection*{Case 3.2:}  $e_1,\,e_2\in \supp(S)$ are the only elements with multiplicity $n-1$ in $S$.

Since $|\supp(W_0)\setminus \{e_1\}|\geq 2$ (per the remarks in Section \ref{sec-intro} regarding minimal zero-sum sequences of length $2n-1$ satisfying Property A), there must be some $ze_1+e_2\in \supp(W_0)$ with $z\in [1,n-1]$. Let $ze_1+e_2\in\supp(W_0)$ be an arbitrary such element, and let $xe_1+ye_2\in \supp(W_1)$ be arbitrary.
Since $\vp_{e_1}(S)=\vp_{e_1}(W_0)=n-1$, \eqref{useful-obs} ensures that $y\neq 0$, so $$y\in [1,n-1].$$
Now $T=S\bdot (ze_1+e_2)^{[-1]}\bdot (xe_1+ye_2)^{[-1]}$ is a subsequence with $|T|=3n-3$ and only two terms having multiplicity $n-1$. Thus Property C implies that there is a zero-sum subsequence $W'_1\mid T$ with $|W'_1|=n$. Letting $W'_0=S\bdot (W'_1)^{[-1]}$, we have a block decomposition $S=W'_0\bdot W'_1$ with
\be\label{eaglete}xe_1+ye_2,ze_1+e_2,e_1,e_2\in \supp(W'_0).\ee Note $e_1,\,e_2\in \supp(W'_0)$ since an $n$-term zero-sum sequence cannot have a term with multiplicity exactly $n-1$, and the terms listed in \eqref{eaglete} are distinct apart from the possible equalities $xe_1+ye_2=e_2$ or $xe_1+ye_2=ze_1+e_2$. If $(e'_1,e'_2)$ is a basis for which Property A holds for $W'_0$, then we must have $e'_1=e_1$ or $e'_1=e_2$, as these are the only terms having multiplicity at least $n-1$. If $e'_1=e_1$, then Property A and \eqref{eaglete} imply $(xe_1+ye_2)-(ze_1+e_2)\in \la e_1\ra$, whence $y=1$. If $xe_1+ye_2=e_2$, then $y=1$ as well. If $e'_1=e_2$ and $xe_1+ye_2\neq e_2$, then Property A and \eqref{eaglete} imply $(xe_1+ye_2)-e_1\in \la e_2\ra$ and $(xe_1+ye_2)-(ze_1+e_2)\in \la e_2\ra$, whence $x=z=1$.

As just shown, an arbitrary $xe_1+ye_2\in \supp(W_1)$ has $x=1$ or $y=1$. If we always have $y=1$, then Item 1 follows with basis $(e_1,e_2)$. Therefore we can assume there is some $$e_1+ye_2 \in \supp(W_1)\quad\mbox{ with $y\in [2,n-1]$}$$ (note that $y\neq 1$ ensures that the coefficient of $e_1$ is one, as just noted), and then the previous argument forces $z=1$. This must then be the case for an arbitrary $ze_1+e_2\in \supp(W_0)$ with $z\in [1,n-1]$, whence $\supp(W_0)=\{e_1,e_2,e_1+e_2\}$ and $W_0=e_1^{[n-1]}\bdot e_2^{[n-1]}\bdot (e_1+e_2)$. But now we cannot have $\supp(W_1)\subseteq \{e_2\}\cup \big(\la e_2\ra+e_1\big)$, lest Item 1 holds with basis $(e_2,e_1)$, so there must be some $$xe_1+e_2\in \supp(W_1)\quad\mbox{ with $x\in [2,n-1]$}$$ (as before, $x\neq 1$ ensures the coefficient of $e_2$ is one).
Letting  $T'=S\bdot (xe_1+e_2)^{[-1]}\bdot (e_1+ye_2)^{[-1]}$, we find that $|T|=3n-3$ with $T$ containing exactly two terms with multiplicity $n-1$. Thus Property C  implies that there is a zero-sum subsequence $W''_1\mid T$ with $|W''_1|=n$. Letting $W''_0=S\bdot (W''_1)^{[-1]}$, we find that $S=W''_0\bdot W''_1$ is a block decomposition with $xe_1+e_2,e_1+ye_2\in \supp(W''_0)$. If $(e''_1,e''_2)$ is a basis for which Property A holds for $W''_0$, then we must have $e''_1=e_1$ or $e''_1=e_2$, as these are the only terms with multiplicity at least $n-1$ in $S$.  In the former case, Property A gives $(xe_1+e_2)-(e_1+ye_2)\in \la e_1\ra$, yielding the contradiction $y=1$. In the latter case, Property A gives
$(xe_1+e_2)-(e_1+ye_2)\in \la e_2\ra$, yielding the contradiction $x=1$. As this exhausts all possibilities, the case and proof are complete.
\end{proof}

\begin{lemma}\label{lemma-casen-gen} Let $n\geq 2$, let $s\geq 1$,   let $G=(\Z/n\Z)^2$, and suppose Property B holds for $G$. If   $S\in\Fc(G)$  is a zero-sum sequence with $|S|=(2+s)n-1$ and $0\notin \Sigma_{\leq n-1}(S)$, then  there is a basis $(e_1,e_2)$ for $G$ such that either
\begin{itemize}
\item[1.] $\supp(S)\subseteq \{e_1\}\cup \big(\la e_1\ra+e_2\big)$ and $\vp_{e_1}(S)\equiv -1\mod n$, or
  \item[2.] $S=e_1^{[an]}\bdot e_2^{[bn-1]}\bdot (xe_1+e_2)^{[cn-1]} \bdot (xe_1+2e_2)$ for some $x\in [2,n-2]$ with $\gcd(x,n)=1$, and some  $a,b,c\geq 1$ with $a+b+c=2+s$.
\end{itemize}
\end{lemma}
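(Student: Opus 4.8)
The plan is to induct on $s\ge1$, the base case $s=1$ being exactly Lemma \ref{lemma-casen}, whose Item 2 is the instance $a=b=c=1$ of Item 2 here. So fix $s\ge2$ and assume the statement for $s-1$. Since $|S|=(2+s)n-1\ge4n-1>3n-2=\mathsf s_{\le n}(G)$, there is an $n$-term zero-sum subsequence $W_1\mid S$, and $W_0:=S\bdot W_1^{[-1]}$ is then a zero-sum of length $(2+(s-1))n-1$ with $0\notin\Sigma_{\le n-1}(W_0)$. By the induction hypothesis $W_0$ satisfies Item 1 or Item 2 with respect to some basis $(e_1,e_2)$; write $\vp_{e_1}(W_0)=kn-1$ with $k\in[1,s]$ in the first case, and $W_0=e_1^{[a'n]}\bdot e_2^{[b'n-1]}\bdot(xe_1+e_2)^{[c'n-1]}\bdot(xe_1+2e_2)$ with $a'+b'+c'=s+1$ in the second. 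The whole point is to show that re-adjoining the single block $W_1$ cannot destroy this structure, beyond possibly raising one multiplicity by $n$ or, when $W_0$ satisfies Item 1, producing the Item 2 shape.

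\textbf{The case $\mathsf h(S)\ge n+1$.} Choosing $W_1=e^{[n]}$ for a term $e$ with $\vp_e(S)\ge n+1$ (an $n$-term zero-sum since $\ord(e)\mid n$) keeps $e\in\supp(W_0)$. If $W_0$ satisfies Item 1 then $e\in\{e_1\}\cup(\la e_1\ra+e_2)$, so $\supp(S)=\supp(W_0)\subseteq\{e_1\}\cup(\la e_1\ra+e_2)$; since $\sigma(S)=0$ and the number of terms of $S$ in $\la e_1\ra+e_2$ is divisible by $n$, we get $\vp_{e_1}(S)\equiv-1\pmod n$, which is Item 1. If $W_0$ satisfies Item 2 then $e\in\{e_1,e_2,xe_1+e_2,xe_1+2e_2\}$; the first three choices give Item 2 for $S$ with $a=a'+1$, $b=b'+1$ or $c=c'+1$, while $e=xe_1+2e_2$ is impossible, since then $S$ would contain two copies of $xe_1+2e_2$, at least $n$ copies of $e_1$, at least $n-1$ of $e_2$ and at least $n-1$ of $xe_1+e_2$, whence (with $xx^*\equiv1\pmod n$) the sequence $(xe_1+2e_2)^{[2]}\bdot(xe_1+e_2)^{[n-x^*-2]}\bdot e_2^{[x^*-2]}\bdot e_1$ is a zero-sum of length $n-1$, contradicting $0\notin\Sigma_{\le n-1}(S)$.

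\textbf{The case $\mathsf h(S)\le n$.} Here $\vp_{e_1}(S)\ge\vp_{e_1}(W_0)$ forces $k=1$ in Item 1 (so $W_0=e_1^{[n-1]}\bdot\prod_{i\in[1,sn]}(y_ie_1+e_2)$ with $\sum y_i\equiv1\pmod n$) and $a'=1$ in Item 2. One must understand how the terms of $W_1$ — or, when $\mathsf h(S)=n$, the term $e$ in a block $W_1=e^{[n]}$ with $\vp_e(S)=n$, which then lies outside $\supp(W_0)$ — sit relative to the cosets of $\la e_1\ra$. Using the elementary facts that a term $je_1$ with $j\in[2,n]$, or a term $g$ of $S$ with $\vp_g(S)\ge n-2$ having a distinct multiple of $g$ in $\supp(S)$, already yields a zero-sum of length $\le n-1$, one reduces to an offending term $g=ue_1+ve_2\in\supp(W_1)$ with $v\in[2,n-1]$. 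One then passes to a zero-sum subsequence $S^*\mid S$ of length $3n-1$ — built from $W_1$ together with a length-$(2n-1)$ zero-sum subsequence of $W_0$ retaining the $n-1$ copies of $e_1$ — that contains $g$, and applies Lemma \ref{lemma-casen} to $S^*$. Tracking which term of $S^*$ can play the multiplicity-$(n-1)$ generator in Item 1 (respectively which four terms form the Item 2 skeleton) forces either $v=1$, so that $g$ lies in $\la e_1\ra+e_2$ and Item 1 holds for $S$ (possibly with respect to a new basis that the argument identifies), or the Item 2 configuration, which then propagates to $S$ and is pinned down as in the previous paragraph; comparison of the various length-$(2n-1)$ block complements inside such $S^*$ is where the perturbation Lemmas \ref{lemma-pertub-I}--\ref{lemma-pertub-III(strong)} enter.

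\textbf{The main obstacle.} The hard part, and exactly the point overlooked in \cite[Proposition 4.2]{propB-GGG}, is the emergence of Item 2. One must show that whenever a block $W_1$ (or one of its terms) forces $\supp(S)$ to meet three cosets of some $\la e_1\ra$, the only configuration compatible with $0\notin\Sigma_{\le n-1}(S)$ is the rigid shape $e_1^{[an]}\bdot e_2^{[bn-1]}\bdot(xe_1+e_2)^{[cn-1]}\bdot(xe_1+2e_2)$ with $x\in[2,n-2]$ and $\gcd(x,n)=1$, and that no sequence satisfying neither Item 1 nor Item 2 survives. Equivalently, the block $W_1$ can only be $e_1^{[n]}$, $e_2^{[n]}$ or $(xe_1+e_2)^{[n]}$ (in particular no copy of $xe_1+2e_2$ may occur in it), every other possibility producing an explicit zero-sum of length $\le n-1$ in $S$. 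The care lies in carrying the multiplicities $an$, $bn-1$, $cn-1$, $1$ through the addition of one block and in writing down the short zero-sum that kills each bad configuration; but because the induction has reduced everything to zero-sum subsequences of length $3n-1$, it is Lemma \ref{lemma-casen}, rather than a fresh massive case analysis, that does the heavy lifting — which is the reason for organising the proof this way.
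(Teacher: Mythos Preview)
Your inductive setup and base case match the paper's, and your treatment of $\mathsf h(S)\ge n+1$ is correct; the explicit length-$(n-1)$ zero-sum $(xe_1+2e_2)^{[2]}\bdot(xe_1+e_2)^{[n-x^*-2]}\bdot e_2^{[x^*-2]}\bdot e_1$ ruling out $e=xe_1+2e_2$ is clean and works since $x^*\in[2,n-2]$.

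The case $\mathsf h(S)\le n$, however, is only a sketch, and it omits exactly the work that gives this lemma its content. Two concrete gaps:

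\emph{(i) Propagation of Item 2.} When a $3n-1$ subsequence $S^*$ lands in Lemma~\ref{lemma-casen}.2, you assert the configuration ``propagates to $S$ and is pinned down as in the previous paragraph.'' But the previous paragraph only treated the extra block $W_1=e^{[n]}$ with $e$ of high multiplicity; for a general block $W_i$ with $\supp(W_i)\not\subseteq\{e_1,e_2,xe_1+e_2\}$ this is precisely where the paper expends its effort. In the paper's Case~1 (Subcase~1.2) one shows, via an arithmetic-progression argument on $\Sigma_{n-y}\big(e_2^{[n-1]}\bdot(xe_1+e_2)^{[n-1]}\big)$, that any term $xe_1+ye_2$ with $y\ge 2$ in such a block forces $xe_1=\pm e_1$, contradicting $x\in[2,n-2]$. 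There is no way to extract this from Lemma~\ref{lemma-casen} alone; an explicit computation with subset sums is required, and you have not supplied it.

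\emph{(ii) The Item 1 side.} When the complement $W_0$ satisfies Item~1 and the offending term $g=ue_1+ve_2$ with $v\in[2,n-1]$ appears in $W_1$, you propose building a $3n-1$ zero-sum $S^*$ from $W_1$ together with a length-$(2n-1)$ zero-sum piece of $W_0$ containing $e_1^{[n-1]}$. You do not verify such a piece exists, and more seriously, applying Lemma~\ref{lemma-casen} to $S^*$ constrains only the terms in $S^*$, not the remaining $(s-1)n$ terms of $W_0$. The paper handles this (its Case~2) by repeatedly \emph{changing the block decomposition}: once one arrives at $W_0=e_1^{[n-1]}\bdot e_2^{[n-1]}\bdot(e_1+e_2)$ one manufactures new decompositions $S=W'_0\bdot W'_1\bdot W_2\bdot\ldots\bdot W_s$ whose $W'_0$ is forced to violate the assumed structure. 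This block-swapping maneuver is the actual mechanism, and it does not appear in your outline.

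Finally, the perturbation Lemmas~\ref{lemma-pertub-I}--\ref{lemma-pertub-III(strong)} are not used anywhere in the proof of Lemma~\ref{lemma-casen-gen}; they enter only in Proposition~\ref{PropBFix}. Invoking them here is a red herring and suggests you have conflated the two arguments.
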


\begin{proof}
If  $s=1$, the desired result follows by Lemma \ref{lemma-casen}. Thus we may assume $s\geq 2$ and proceed by induction on $s$.
As in the proof of Lemma \ref{lemma-casen}, since Property B holds for $G$ by hypothesis, this implies Property C does as well, with all the relevant commentary from Section \ref{sec-intro} applicable.
Since $0\notin \Sigma_{\leq n-1}(S)$ and $\mathsf s_{\leq n}(G)=3n-2$, it follows that any sequence $T\mid S$ with $|T|\geq 3n-2$ must contain an $n$-term zero-sum subsequence. Moreover, any subsequence $T\mid S$ with $|S|=3n-3$ that does not have the form given by \eqref{PropC-char} must also contain an $n$-term zero-sum subsequence. Since $|S|=(s-1)n+3n-1$, $S$ contains at least $s$ disjoint  $n$-term zero-sum subsequences $W_1\bdot\ldots\bdot W_s\mid S$, so  $W_i$ is zero-sum and $|W_i|=n$ for $i\in [1,s]$. Then $R=W_1\bdot \ldots \bdot W_s$ is a zero-sum subsequence of length $sn$. If $R\mid S$ is any zero-sum sequence of length $tn$ with $t\in [2,s]$, then, since $|R|=(t-3)n+3n$, we find at least $t-2$ disjoint zero-sum subsequences $W_1\bdot\ldots \bdot W_{t-2}\mid R$ with $|W_i|=n$ for  $i\in [1,t-2]$. But then $R\bdot (W_1\bdot\ldots \bdot W_{t-2})^{[-1]}$ is a zero-sum of length $2n>\mathsf D(G)$ with $0\notin\Sigma_{\leq n-1}(S)$, which must then factor as a product of two zero-sum subsequences $W_{t-1}$ and $W_t$ each of length $n$. Thus any zero-sum subsequence
$R\mid S$ of length $tn$, with $t\in [1,s]$, factors $R=W_1\bdot\ldots \bdot W_{t}$ as a product of $t$ zero-sum subsequences of length $n$ (this is trivial for $t=1$). Returning to the case $t=s$ and setting $W_0=S\bdot (W_1\bdot\ldots\bdot W_s)^{[-1]}$, we find that $W_0$ is a zero-sum subsequence of length $|W_0|=2n-1$, and since $0\notin \Sigma_{\leq n-1}(S)$, it must be a minimal zero-sum sequence. Thus, as property B holds for $G$, it follows that Property A holds for $W_0$.
Define a \emph{block decomposition} of $S$ to be a factorization $S=W_0\bdot W_1\bdot\ldots W_s$ into zero-sum subsequences $W_i$ for $i\in [0,s]$ such that $|W_0|=2n-1$ and $|W_i|=n$ for $i\in [1,s]$.  As just noted, $S$ has a block decomposition.
As in Lemma \ref{lemma-casen}, if $n=2$, then $W_0$ must consist of the three non-zero element of $G=(\Z/2\Z)^2$ with each $W_i$, for $i\in [1,s]$, consisting of a single non-zero term repeated twice. In such case, Item 1 holds using any basis $(e_1,e_2)$ of $G$, allowing us to assume $n\geq 3$.

\subsection*{Case 1:} There is a zero-sum subsequence $S'\mid S$ with $|S|=3n-1$ that satisfies Lemma \ref{lemma-casen}.2.

In this case, \be\label{S'-define}S'=e_1^{[n]}\bdot e_2^{[n-1]}\bdot (xe_1+e_2)^{[n-1]}\bdot (xe_1+2e_2)\ee for some $x\in [2,n-2]$ with $\gcd(x,n)=1$ and $(e_1,e_2)$ a basis for $G$ (implying $n\geq 5$). Set $$e_3=xe_1+e_2$$ and note that the above  is equivalent to
$S'=e_1^{[n]}\bdot e_2^{[n-1]}\bdot e_3^{[n-1]}\bdot (e_2+e_3)$ and $$e_1=x^*(e_2-e_3)$$ with $(e_2,e_3)$ a basis for $G$, where $x^*\in [2,n-2]$ is the multiplicative inverse of $-x$ modulo $n$, from which it is evident that there is complete symmetry between the elements $e_2$ and $e_3$.  Set $$W_0=e_2^{[n-1]}\bdot e_3^{[n-1]}\bdot (e_2+e_3)\quad\und\quad W_1=e_1^{[n]}=(x^*e_2-x^*e_3)^{[n]}.$$ Now $S\bdot (S')^{[-1]}$ is a zero-sum subsequence of length $(s-1)n$, thus factoring into a product of $s-1$ zero-sum subsequences of length $n$ as argued earlier. It follows that there is a block decomposition $S=W_0\bdot W_1\bdot W_2\bdot \ldots\bdot W_s$.
Observe that any zero-sum sequence $T$ with $\supp(T)\subseteq \{e_1,e_2,e_3\}$ and $|T|=n$ must have the form $T=e_i^{[n]}$ for some $i\in [1,3]$. Hence, if we have $\supp(W_2\bdot\ldots\bdot W_s)\subseteq \{e_1,e_2,e_3\}$, then Item 2 follows. We can thus w.l.o.g. assume there is some \be\label{gnotthere}g\in \supp(W_2)\quad\mbox{ with $g\notin \{e_1,e_2,e_3\}$}.\ee Apply Lemma \ref{lemma-casen} to $W_0\bdot W_2$.


\subsection*{Subcase 1.1:} Lemma \ref{lemma-casen}.2 holds for $W_0\bdot W_2$.

Then $$W_0\bdot W_2=(e'_1)^{[n]}\bdot (e'_2)^{[n-1]}\bdot (e'_3)^{[n-1]}\bdot (e'_2+e'_3)$$ for some $e'_1,e'_2,e'_3\in G$ with $(e'_1,e'_2)$ a basis and $$e'_3=ye'_1+e'_2\quad\mbox{ for some $y\in [2,n-2]$ with $\gcd(y,n)=1$}.$$ In particular, $|\supp(W_0\bdot W_2)|=4$ and $\mathsf h(W_0\bdot W_2)=n$.
If $\vp_{e_2+e_3}(W_0\bdot W_2)=n$, then $e_2+e_3$ would have multiplicity exactly $\vp_{e_2+e_3}(W_0\bdot W_2)-\vp_{e_2+e_3}(W_0)=n-1$ in the $n$-term zero-sum $W_2$, which is not possible. Therefore $\vp_{e_2+e_3}(W_0\bdot W_2)\leq n-1$.

Let us next handle the case when $\vp_{e_2+e_3}(W_0\bdot W_2)=n-1$. In such case, $\vp_{e_2+e_3}(W_2)=n-2$ and the only possible terms that can have multiplicity $n\geq 5$ in $W_0\bdot W_2$ are $e_2$ or $e_3$. Assuming w.l.o.g. that $e'_1=e_2$ is the term with multiplicity  $n$, we find that $e_3$ and $e_2+e_3$ must be the two terms of $W_0\bdot W_2$ with multiplicity $n-1$.
In such case, we have w.l.o.g.  $e'_2=e_3$ and $ye'_1+e'_2=e'_3=e_2+e_3=e'_1+e'_2$, contradicting that $y\in [2,n-2]$.
Therefore we must have $\vp_{e_2+e_3}(W_0\bdot W_2)<n-1$, ensuring that $$e_2+e_3=e'_2+e'_3.$$

Since $e'_2+e'_3=e_2+e_2$  has multiplicity $1$ in $W_0\bdot W_2$ and occurs in $W_0$, we cannot have $e_2+e_3=g\in \supp(W_2)$, meaning  $$g\notin \{e_1,e_2,e_3,e_2+e_3\}$$ with $\vp_g(W_2)=\vp_g(W_0\bdot W_2)\in \{n-1,n\}$ (as all other terms in $\supp(W_0\bdot W_2)=\{e'_1,e'_2,e'_3,e'_2+e'_3\}$ have multiplicity $n-1$ or $n$). However, since the $n$-term zero-sum $W_2$ cannot contain a term with multiplicity exactly $n$, it follows that $\vp_g(W_2)=n$, whence $W_2=g^{[n]}$. Thus $e'_1=g$, and w.l.o.g. $e'_2=e_2$ and $e'_3=e_3$, so
$$g=e'_1= y^*(e'_2-e'_3)= y^*(e_2-e_3)\in \la x^*(e_2-e_3)\ra=\la e_1\ra,$$ where $y^*\in [2,n-2]$ is the multiplicative inverse of $-y$ modulo $n$. But now \eqref{useful-obs} ensures that $g=e_1$, contradicting \eqref{gnotthere}, which completes the subcase.

\subsection*{Subcase 1.2} Lemma \ref{lemma-casen}.1  holds for $W_0\bdot W_2$, say with basis $(f_1,f_2)$.

If $f_1\notin \{e_2,e_3,e_2+e_3\}$,
then, since $e_2,e_3, e_2+e_3\in \supp(W_0)$, Lemma \ref{lemma-casen}.1 implies $e_3=(e_2+e_3)-e_2\in \la f_1\ra$ and $e_2=(e_2+e_3)-e_3\in \la f_1\ra$, implying $G=\la e_2,e_3\ra=\la f_1\ra$, which is not possible. Therefore  we must have $f_1\in \{e_2,e_3,e_2+e_3\}$. If $f_1=e_2+e_3$, then Lemma \ref{lemma-casen}.1 implies $e_2-e_3\in \la f_1\ra=\la e_2+e_3\ra$, which is not possible in view of $n\geq 3$. Thus $f\in \{e_2,e_3\}$, and by the symmetry between $e_2$ and $e_3$, we can w.l.o.g. assume  $f_1=e_2$.
In consequence, Lemma \ref{lemma-casen}.1 holding  for $W_0\bdot W_2$ implies \be\label{supp-tactic}\supp(W_2)\subseteq \{e_2\}\cup \big(\la e_2\ra+e_3\big).\ee Since there is some $g\in \supp(W_2)$ with $g\notin \{e_1,e_2,e_3\}$, it follows in view of \eqref{supp-tactic} and \eqref{useful-obs} that $$g=(y-1)e_2+e_3=xe_1+ye_2\quad\mbox{ for some $y\in [2,n-1]$}.$$

Suppose $y=2$, meaning $g=e_2+e_3=xe_1+2e_2\in \supp(W_2)$. In view of $\vp_{e_2+e_3}(W_0)=1$ and \eqref{S'-define},  $$R=e_2^{[n-1]}\bdot (xe_1+e_2)^{[n-1]}\bdot (xe_1+2e_2)^{[2]}\bdot e_1^{[n-1]}$$ is a subsequence of $S$. Observe that \be\label{listout}\Sigma_{n-4}(e_2^{[n-1]}\bdot (xe_1+e_2)^{[n-1]})=(n-4)e_2+\{0, xe_1,2xe_1,\ldots, (n-4)xe_1\}.\ee If $(-2x-1)e_1-4e_2\in \Sigma_{n-4}(e_2^{[n-1]}\bdot (xe_1+e_2)^{[n-1]})$, say with $T\mid e_2^{[n-1]}\bdot (xe_1+e_2)^{[n-1]}$ having $|T|=n-4$ and $\sigma(T)=(-2x-1)e_1-4e_2$, then $e_1\bdot (xe_1+2e_2)^{[2]}\bdot T$ will be a zero-sum subsequence of $R$ with length $n-1$, contradicting the hypothesis that $0\notin \Sigma_{\leq n-1}(S)$. Therefore  we must have $(-2x-1)e_1-4e_2\notin \Sigma_{n-4}(e_2^{[n-1]}\bdot (xe_1+e_2)^{[n-1]})$, which combined with \eqref{listout} forces $(-2x-1)e_1\in \{(n-3)xe_1, (n-2)xe_1, (n-1)xe_1\}$ since $\gcd(x,n)=1$. Hence either $x=1$ or $x=n-1$, both contradicting that $x\in [2,n-2]$. So we instead conclude that $$g=xe_1+ye_2\in \supp(W_2) \quad\mbox{ with} \quad y\in [3,n-1].$$

We have  $$R=e_2^{[n-1]}\bdot (xe_1+e_2)^{[n-1]}\bdot (xe_1+ye_2)\bdot e_1^{[n-1]}$$ as a subsequence of $S$. Observe that \be\label{listout-2}(xe_1+ye_2)+\Sigma_{n-y}(e_2^{[n-1]}\bdot (xe_1+e_2)^{[n-1]})=\{xe_1, 2xe_1,\ldots, (n-y+1)xe_1\}.\ee
If $(n-z)e_1\in (xe_1+ye_2)+\Sigma_{n-y}(e_2^{[n-1]}\bdot (xe_1+e_2)^{[n-1]})$ for some $z\in [0,y-2]$, say with $T\mid e_2^{[n-1]}\bdot (xe_1+e_2)^{[n-1]}$ a subsequence with $|T|=n-y$ and $\sigma((xe_1+ye_2)\bdot T)=(n-z)e_1$, then $(xe_1+ye_2)\bdot T\bdot e_1^{[z]}$ will be a zero-sum subsequence of $R$ with length $n-y+1+z\leq n-1$, contradicting the hypothesis that $0\notin \Sigma_{\leq n-1}(S)$. Therefore we instead conclude that $$(xe_1+ye_2)+\Sigma_{n-y}(e_2^{[n-1]}\bdot (xe_1+e_2)^{[n-1]})\subseteq \{e_1,2e_1,\ldots, (n-y+1)e_1\},$$ which is an arithmetic progression with difference $e_1$ and length $n-y+1$. In view of \eqref{listout-2}, we see that $(xe_1+ye_2)+\Sigma_{n-y}(e_2^{[n-1]}\bdot (xe_1+e_2)^{[n-1]})$ has size $n-y+1\in [2,n-2]$ and equals an arithmetic progression with difference $xe_1$. As a result, since the difference $d$ of an arithmetic progression of length $\ell\in [2,\ord(d)-2]$ is unique up to sign, we conclude that $xe_1=\pm e_1$, contradicting that $x\in [2,n-2]$, which completes Case 1.

\subsection*{Case 2:} Every zero-sum subsequence $S'\mid S$ with $|S|=3n-1$  satisfies Lemma \ref{lemma-casen}.1.

Let $S=W_0\bdot W_1\bdot\ldots\bdot W_s$ be an arbitrary block decomposition. Apply the induction hypothesis to $W_0\bdot W_1\bdot\ldots\bdot W_{s-1}$. If Item 2, holds, then there will be a subsequence $S'\mid W_0\bdot W_1\bdot\ldots\bdot W_{s-1}$ with $|S'|=3n-1$ satisfying Lemma \ref{lemma-casen}.2, contrary to case hypothesis. Therefore we can assume Item 1 holds, say with basis $(e_1,e_2)$, and by choosing the representative $e_2\in \la e_1\ra+e_2$ appropriately, we can w.l.o.g. assume $e_2\in \supp(W_0)$.
We then have $$\supp(W_0\bdot W_1\bdot \ldots\bdot W_{s-1})\subseteq \{e_1\}\cup \big(\la e_1\ra+e_2\big)$$ with $\vp_{e_1}(W_0\bdot W_1\bdot \ldots\bdot W_{s-1})\equiv -1\mod n$.
As noted in Section \ref{sec-intro}, any $n$-term zero-sum sequence $T$ with $\supp(T)\subseteq\{ e_1\}\cup \big(\la e_1\ra+e_2\big)$ must have $T=e_1^{[n]}$ or $T=\prod^\bullet_{i\in [1,n]}(y_ie_1+e_2)$ for some $y_1,\ldots,y_n\in [0,n-1]$ with $y_1+\ldots+y_n\equiv 0\mod n$. Since the multiplicity of $e_1$ is congruent to $0$ modulo $n$ in any such $T$, it follows that $\vp_{e_1}(W_0)\equiv n-1\mod n$, forcing $\vp_{e_1}(W_0)=n-1$ as $|W_0|=2n-1$.
Thus $$W_0=e_1^{[n-1]}\bdot{\prod}^{\bullet}_{i\in [1,n]}(x_ie_1+e_2)$$ for some $x_1,\ldots,x_n\in [0,n-1]$ with $x_1+\ldots+x_n\equiv 1\mod n$ (and some $x_i=0$ since $e_2\in \supp(W_0)$).
Apply Lemma \ref{lemma-casen} to $W_0\bdot W_s$. By case hypothesis, Lemma \ref{lemma-casen}.1 must hold for $W_0\bdot W_s$, say with basis $(f_1,f_2)$.

If $f_1=e_1$, then $e_2\in \supp(W_0)$ and Lemma \ref{lemma-casen}.1 imply  that $\supp(W_0\bdot W_s)\subseteq \{e_1\}\cup \big(\la e_1\ra+e_2\big)$, and Item 1 follows with basis $(e_1,e_2)$. If $f_1\notin \supp(W_0)$, then Lemma \ref{lemma-casen}.1 implies $e_1-e_2\in \la f_1\ra$ and $x_ie_1=(x_ie_1+e_2)-e_2\in \la f_1\ra$ for all $i=1,\ldots,n$.
Thus $\la x_1e_1,\ldots,x_ne_1\ra\subseteq \la f_1\ra$. Since $ x_1+\ldots+x_n\equiv 1\mod n$, it follows that $\gcd(x_1,\ldots,x_n,n)=1$, whence $e_1\in \la e_1\ra=\la x_1e_1,\ldots,x_ne_1\ra\subseteq \la f_1\ra$.
Combined with the fact that $e_1-e_2\in \la f_1\ra$, we deduce that $G=\la e_1-e_2, e_1\ra\subseteq \la f_1\ra$, which is not possible. Therefore we instead conclude that $f_1\in \supp(W_0)$, and since $f_1\neq e_1$, we must have $f_1=ze_1+e_2$ for some $z\in [0,n-1]$. Replacing $e_2$ by $f_1\in \supp(W_0)$, we can assume $f_1=e_2$. But now $\supp(W_0)\subseteq \Big(\{e_1\}\cup \big(\la e_1\ra+e_2\big)\Big)\cap \Big(\{e_2\}\cup \big(\la e_2\ra+e_1\big)\Big)=\{e_1,e_2,e_1+e_2\}$. Since $x_1+\ldots+x_n\equiv 1\mod n$, this is only possible if $W_0=e_1^{[n-1]}\bdot e_2^{[n-1]}\bdot (e_1+e_2)$.

As we began with an arbitrary block decomposition, we see that proof is complete unless every block decomposition $S=W_0\bdot W_1\bdot\ldots \bdot W_s$ has $$W_0=e_1^{[n-1]}\bdot e_2^{[n-1]}\bdot (e_1+e_2)$$ for some basis $(e_1,e_2)$ for $G$ with $\supp(S\bdot W_s^{[-1]})\subseteq\{ e_1\}\cup \big(\la e_1\ra+e_2\big)$ and with Lemma \ref{lemma-casen}.1 holding for $W_0\bdot W_s$ using the basis $(e_2,e_1)$. If we can show there is a block decomposition that does not satisfy these conditions, then the proof will be complete.

We have  $\supp(W_s)\subseteq \{e_2\}\cup \big(\la e_2\ra+e_1\big)$ since Lemma \ref{lemma-casen}.1 holds for $W_0\bdot W_s$ using the basis $(e_2,e_1)$. Thus either $W_s=e_2^{[n]}$, in which case Lemma \ref{lemma-casen-gen}.1 holds for $S$ using the basis $(e_1,e_2)$, or else $$\supp(W_s)\subseteq \la e_2\ra+e_1.$$ As we are done in the former case, we assume the latter holds. But now there must be some $$ye_2+e_1\in \supp(W_s)\quad\mbox{ with $y\in [2,n-1]$},$$ for if $\supp(W_s)\subseteq \{e_1,e_2+e_1\}$, then $W_s=e_1^{[n]}$ or $W_s=(e_2+e_1)^{[n]}$ for the $n$-term zero-sum $W_s$, in which case Lemma \ref{lemma-casen-gen}.1 holds for $S$ using the basis $(e_1,e_2)$. Likewise, we must also have some $xe_1+e_2\in \supp(W_1\bdot\ldots\bdot W_{s-1})$ with $x\in [2,n-1]$, else Lemma \ref{lemma-casen-gen}.1 holds for $S$ using the basis $(e_2,e_1)$. We may w.l.o.g. assume $$xe_1+e_2\in \supp(W_1)\subseteq  \la e_1\ra+e_2\quad\mbox{ with $x\in [2,n-1]$}.$$

Suppose $\vp_{e_2}(W_1)\geq 1$. Then $\vp_{xe_1+e_2}(W_0\bdot W_1)=\vp_{xe_1+e_2}(W_1)\leq n-2$ (as the $n$-term zero-sum  $W_1$ cannot have a term with multiplicity exactly $n-1$). Set $W'_1=e_2^{[n]}$ and $W'_0=W_0\bdot W_1\bdot e_2^{[-n]}$.
Then $S=W'_0\bdot W'_1\bdot W_2\bdot\ldots\bdot W_s$ is a block decomposition with $e_1+e_2,xe_1+e_2\in \supp(W'_0)$, $\vp_{e_1}(W'_0)=n-1$ and
$\vp_{xe_1+e_2}(W'_0)\leq n-2$. As a result, the new block decomposition can only have the required form if  $e_1+(e_1+e_2)=xe_1+e_2$ with $\vp_{e_1+e_2}(W'_0)=n-1$ and Lemma \ref{lemma-casen}.1 holding for $W'_0\bdot W_s$ using the basis $(e_1+e_2,e_1)$ or $(e_1,e_1+e_2)$. Hence $x=2$,
 and either  $\supp(W_s)\subseteq \Big(\la e_2\ra+e_1\Big)\cap \Big(\{e_1+e_2\}\cup \big(\la e_1+e_2\ra+e_1\big)\Big)=\{e_1+e_2,e_1\}$ or $\supp(W_s)\subseteq \Big(\la e_2\ra+e_1\Big)\cap \Big(\{e_1\}\cup \big(\la e_1\ra+e_1+e_2\big)\Big)=\{e_1+e_2,e_1\}$, both contradicting that $e_1+ye_2\in \supp(W_s)$ with $y\in [2,n-1]$. So we conclude that $e_2\notin \supp(W_1)$.

Now $R=W_0\bdot W_1\bdot (xe_1+e_2)^{[-1]}$ has $|R|=3n-2=\mathsf s_{\leq n}(G)$, and thus there must be a zero-sum subsequence $W'_0\mid R$ with $|W'_0|=n$. As a result, there is a block decomposition $S=W'_0\bdot W'_1\bdot W_2\bdot\ldots \bdot W_s$ with $$xe_1+e_2\in \supp(W'_0).$$
Since $\vp_{e_1}(W_0\bdot W_1)=\vp_{e_2}(W_0\bdot W_1)=n-1$ (in view of $e_2\notin \supp(W_1)$ and $\supp(W_1)\subseteq  \la e_1\ra+e_2$), it follows that $$e_1,e_2\in \supp(W'_0),$$ since an $n$-term zero-sum cannot contain a term with multiplicity exactly $n-1$. Since $e_1+e_2\neq xe_1+e_2$ (as $x\in [2,n-1]$) and $e_2+(xe_1+e_2)\neq e_1$, it follows that the new block decomposition can only have the required form if $e_1+(xe_1+e_2)=e_2$ with $\vp_{e_1}(W'_0)=\vp_{xe_1+e_2}(W'_0)=n-1$ and Lemma \ref{lemma-casen}.1 holding for $W'_0\bdot W_s$ using the basis $(xe_1+e_2,e_1)$ or $(e_1,xe_1+e_2)$. Hence $x=n-1$ and either $\supp(W_s)\subseteq \big(\la e_2\ra+e_1\big)\cap \Big(\{-e_1+e_2\}\cup \big(\la -e_1+e_2\ra+e_1\big)\Big)=\{e_1\}$ or $\supp(W_s)\subseteq \big(\la e_2\ra+e_1\big)\cap \Big(\{e_1\}\cup \big(\la e_1\ra-e_1+e_2\big)\Big)=\{e_1,e_1+e_2\}$, again contradicting that $e_1+ye_2\in \supp(W_s)$ with $y\in [2,n-1]$, which completes the proof.
\end{proof}

The following proposition now handles the missing case in the proof of the main result from \cite{propB-GGG}. Lemma \ref{lemma-casen-gen} replaces the use of \cite[Proposition 4.2]{propB-GGG}. After invoking Lemma \ref{lemma-casen-gen}, if ever  Lemma \ref{lemma-casen-gen}.2 holds, then applying Proposition \ref{PropBFix} immediately concludes the proof, allowing one to assume Lemma \ref{lemma-casen-gen}.1 always holds, which was simply assumed to be the case in \cite{propB-GGG} due to the faulty statement of
\cite[Proposition 4.2]{propB-GGG}. Of course, once the main result of \cite{propB-GGG} is proved, it can be used to show \cite[Proposition 4.2]{propB-GGG} is retrospectively correct.

\begin{proposition}
\label{PropBFix}
Let $m\geq 4$ and $n\geq 2$  and let $G=(\Z/mn\Z)^2$. Suppose $S\in\mathcal F(G)$ is a minimal zero-sum sequence of length $|S|=2mn-1$ and that Property B holds for both $
(\Z/m\Z)^2$ and $(\Z/n\Z)^2$.  Let $\varphi: G\rightarrow G$ be the multiplication by $m$ homomorphism.
\begin{itemize}
\item[1.] $\varphi(S)$ is zero-sum with $0\notin \Sigma_{\leq n-1}(\varphi(S))$.
\item[2.] If Lemma \ref{lemma-casen-gen}.2 holds for $\varphi(S)$, then Property A holds for $S$.
\end{itemize}
\end{proposition}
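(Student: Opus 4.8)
The plan is to dispatch Part 1 by a direct extraction argument and to reduce Part 2 to showing its hypothesis cannot occur under the standing assumptions.

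\smallskip
\emph{Part 1.} That $\varphi(S)$ is zero-sum is immediate from $\sigma(\varphi(S))=\varphi(\sigma(S))=0$. Write $H=\ker\varphi$; since $\varphi$ is multiplication by $m$ on $(\Z/mn\Z)^2$, we have $H\cong(\Z/m\Z)^2$ and $\varphi(G)\cong(\Z/n\Z)^2$, so $\mathsf D(H)=2m-1$, $\mathsf D(\varphi(G))=2n-1$ and $\mathsf s_{\le n}(\varphi(G))=3n-2$. Suppose for contradiction that some $T_0\mid S$ has $1\le|T_0|\le n-1$ and $\varphi(\sigma(T_0))=0$, i.e. $\sigma(T_0)\in H$. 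Since $S$ is a minimal zero-sum it has no proper, nontrivial zero-sum subsequence, so $\sigma(T_0)\ne 0$ (note $|T_0|<|S|$). Put $Y_0=\varphi(T_0)\mid\varphi(S)$, a nontrivial zero-sum of length $\le n-1$, and then greedily extract from $\varphi(S)$ further pairwise disjoint minimal nontrivial zero-sum subsequences: while the remaining length is $\ge 3n-2$ the next extracted piece has length $\le n$ by $\mathsf s_{\le n}(\varphi(G))=3n-2$, and while the remaining length is $\ge 2n-1$ a minimal zero-sum of length $\le 2n-1$ can still be extracted by $\mathsf D(\varphi(G))=2n-1$. A routine count against $|\varphi(S)|=2mn-1$ shows one obtains this way $2m-1$ pairwise disjoint nontrivial zero-sum subsequences $Y_0,\dots,Y_{2m-2}$ of $\varphi(S)$ of total length at most $2mn-2$ (for $n\ge 3$: $Y_0$, then $2m-3$ pieces of length $\le n$, then one piece of length $\le 2n-1$; for $n=2$: $Y_0$ and $2m-2$ pieces of length $\le 2$). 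Lifting, there are pairwise disjoint $T_0,\dots,T_{2m-2}\mid S$ with $\varphi(T_i)=Y_i$, whence each $\sigma(T_i)\in H$, so $\sigma(T_0),\dots,\sigma(T_{2m-2})$ is a sequence of $2m-1=\mathsf D(H)$ terms of $H$ and some nonempty $\mathcal I\subseteq\{0,\dots,2m-2\}$ has $\Summ{i\in\mathcal I}\sigma(T_i)=0$. Then ${\prod}^\bullet_{i\in\mathcal I}T_i\mid S$ is a nontrivial zero-sum subsequence of length $\le\Summ{i}|Y_i|\le 2mn-2<|S|$, hence a proper one, contradicting that $S$ is a minimal zero-sum. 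So $0\notin\Sigma_{\le n-1}(\varphi(S))$.

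\smallskip
\emph{Part 2.} The first observation is that the conclusion is \emph{equivalent to the impossibility of the hypothesis}. Indeed, if $S$ had Property A with respect to a basis $(E_1,E_2)$ of $G$, then $\varphi(E_1),\varphi(E_2)$ both have order $n$ and generate $\varphi(G)$, hence form a basis of $\varphi(G)\cong(\Z/n\Z)^2$, and $\supp(\varphi(S))=\varphi(\supp(S))\subseteq\{\varphi(E_1)\}\cup\big(\la\varphi(E_1)\ra+\varphi(E_2)\big)$, i.e. $\varphi(S)$ would have Property A. But no sequence of the form in Lemma \ref{lemma-casen-gen}.2 satisfies Property A: its support $\{\bar e_1,\bar e_2,x\bar e_1+\bar e_2,x\bar e_1+2\bar e_2\}$ reduces modulo $\la\bar e_1\ra$ to three distinct classes, and running through the four candidates for the role of ``$e_1$'' each forces $x\equiv\pm1\bmod n$ or $n\le 2$, both excluded since $x\in[2,n-2]$. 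Thus Part 2 amounts to: assuming $\varphi(S)$ has the form in Lemma \ref{lemma-casen-gen}.2, derive a contradiction.

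To do so I would argue fiberwise. Write $\varphi(S)=\bar e_1^{[an]}\bdot\bar e_2^{[bn-1]}\bdot(x\bar e_1+\bar e_2)^{[cn-1]}\bdot(x\bar e_1+2\bar e_2)$ with $a+b+c=2m$, fix lifts $E_1,E_2\in G$ of $\bar e_1,\bar e_2$, and partition $S=S_1\bdot S_2\bdot S_3\bdot S_4$ according to which coset of $H=\ker\varphi$ each term lies in (over $\bar e_1$, $\bar e_2$, $x\bar e_1+\bar e_2$, $x\bar e_1+2\bar e_2$, respectively), so $|S_1|=an$, $|S_2|=bn-1$, $|S_3|=cn-1$, and $S_4$ is a single term. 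The constraints available are: $S$ has \emph{no} proper nontrivial zero-sum subsequence; $0\notin\Sigma_{\le n-1}(\varphi(S))$ from Part 1; and, since $H\cong(\Z/m\Z)^2$, Property B (hence Property C) for $(\Z/m\Z)^2$ together with $\mathsf D(H)=2m-1$ and the perturbation Lemmas \ref{lemma-pertub-I}, \ref{lemma-pertub-II(weak)}, \ref{lemma-pertub-III(strong)}. Using the symmetry $\bar e_1=x^*(\bar e_2-\bar e_3)$ (with $\bar e_3=x\bar e_1+\bar e_2$, $-xx^*\equiv1\bmod n$, $x^*\in[2,n-2]$), one forms subsequences of $S$ by combining $x^*$ terms of $S_2$ with $n-x^*$ terms of $S_3$ (and variants involving $S_4$), exactly as in Case 1 of the proof of Lemma \ref{lemma-casen-gen}, so that their $\varphi$-image is a short zero-sum; controlling their $H$-parts via the structure theory of $(\Z/m\Z)^2$ should then force a zero-sum subsequence of $S$ of length strictly between $1$ and $2mn-1$, contradicting minimality.

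\smallskip
The main obstacle, and the step requiring the most care, is precisely this fiberwise lifting: the arithmetic-progression and short-zero-sum estimates that drive Case 1 of Lemma \ref{lemma-casen-gen} take place in $\varphi(G)\cong(\Z/n\Z)^2$, and to pull the resulting short zero-sums back to $G$ with their lengths under control one must first pin down the $\ker\varphi$-parts of the blocks $S_1,\dots,S_4$ --- this is where Property B for $(\Z/m\Z)^2$ and the perturbation lemmas are essential, and where a genuine case analysis seems unavoidable.
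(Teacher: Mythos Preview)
Your Part 1 is correct and essentially the paper's argument, with the minor variant that you extract $2m-1$ blocks of total length $\le 2mn-2$ (so any zero-sum built from them is proper in $S$), while the paper extracts $2m>\mathsf D(\ker\varphi)$ blocks and uses properness at the level of $\ker\varphi$.

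For Part 2, your logical reduction is correct and worth noting: Property~A for $S$ forces Property~A for $\varphi(S)$ with respect to the basis $(mE_1,mE_2)$, and you verify that no sequence of the shape in Lemma~\ref{lemma-casen-gen}.2 has Property~A, so the implication to be proved is equivalent to showing that this shape cannot arise for $\varphi(S)$. The paper does not state this reformulation explicitly, but it is consistent with it: in the paper's proof every branch either terminates in an outright contradiction or in ``Property~A holds for $S$, as desired''---and by your observation the latter branches are in fact vacuous.

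The genuine gap is that you stop at the plan. Everything after your reduction is a sketch: you set up the fibers $S_1,\dots,S_4$, invoke the symmetry $\overline e_1=x^*(\overline e_2-\overline e_3)$, and point to the perturbation lemmas and to Case~1 of Lemma~\ref{lemma-casen-gen}, but you do not actually produce the forbidden short zero-sum in $S$. You yourself flag the obstacle: controlling the $\ker\varphi$-parts so that the $n$-term zero-sums one sees in $\varphi(G)$ can be lifted to zero-sums (of controlled length) in $G$. This is precisely the entire content of the paper's proof of Part~2, which runs through five nontrivial claims (Claims~A--E) establishing that the $\ker\varphi$-component $\psi$ is \emph{constant} on each fiber $S_k$, followed by a two-case endgame using $(\overline e_2+,\overline e_3)$- and $(\overline e_3+,\overline e_2)$-swaps to force $ne_1\in\{0,F,2F\}$ and reach a contradiction. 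None of this machinery---the block decompositions $S=W_0\bdot\ldots\bdot W_{2m-2}$, the type (I)/(II) dichotomy, the associated minimal zero-sum $S_\sigma\in\mathcal F(nG)$ to which Property~B for $(\Z/m\Z)^2$ is applied, or the systematic swap arguments via Lemmas~\ref{lemma-pertub-I}--\ref{lemma-pertub-III(strong)}---appears in your write-up beyond a mention. Your proposal is a correct outline of where the work lies, but it is not a proof of Part~2.
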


\begin{proof}
Observe that $$\ker\varphi =nG\cong (\Z/m\Z)^2\quad\und\quad\varphi(G)=mG\cong (\Z/n\Z)^2.$$
1. Suppose there were some nontrivial $W_1\mid S$ with $\varphi(W_1)$ zero-sum and $|W_1|\leq n-1$. Then, since $|S|=2mn-1=(n-1+(2m-4)n)+3n$, repeated application of the definition of $\mathsf s_{\leq n}(mG)=\mathsf s_{\leq n}((\Z/n\Z)^2)=3n-2$ yields disjoint subsequences $W_1\bdot \ldots \bdot W_{2m-2}\mid S$ with $|W_1|\leq n-1$, $|W_i|\leq n$ for all $i\in [2,2m-2]$, and $\varphi(W_i)$ zero-sum for all $i\in[1,2m-2]$. In such case, $W=S\bdot (W_1\bdot\ldots \bdot W_{2m-2})^{[-1]}$ is a subsequence with $|W|\geq 2n$ and $\varphi(W)$ zero-sum. Thus, since $\mathsf D(mG)=\mathsf D((\Z/n\Z)^2)=2n-1$, we have a factorization $W=W_{2m-1}\bdot W_{2m}$
with $\varphi(W_{2m-1})$ and $\varphi(W_{2m})$ both nontrivial zero-sum subsequences.
But now $\prod^\bullet_{i\in [1,2m]}\sigma(W_i)\in\Fc(\ker \varphi)$ is a zero-sum sequence of length $2m>2m-1=\mathsf D((\Z/m\Z)^2)=\mathsf D(\ker \varphi)$, ensuring there is some proper, nontrivial zero-sum subsequence, say $\prod^\bullet_{i\in I}\sigma(W_i)$. In such case, the sequence $\prod_{i\in I}^\bullet W_i$ is a proper, nontrivial zero-sum subsequence of $S$, contradicting that $S$ is a minimal zero-sum sequence.

2.
Since Lemma \ref{lemma-casen-gen}.2 holds for $\varphi(S)$, it follows that   $n\geq 5$ and  there is a basis $(\overline e_1,\overline e_2)$ for $mG$  such that \be\label{S-struct}\varphi(S)=\overline e_1^{[an]}\bdot \overline e_2^{[bn-1]}\bdot (x\overline e_1+\overline e_2)^{[cn-1]} \bdot (x\overline e_1+2\overline e_2)\ee for some $x\in [2,n-2]$ with $\gcd(x,n)=1$, and some $a,b,c\geq 1$ with $a+b+c=2m$. Set \be\label{e2e3-rel-e1}\overline e_3=x\overline e_1+\overline e_2, \quad\mbox{ so } \quad \overline e_2=(n-x)\overline e_1+\overline e_3,\ee and note that $\overline e_1=x^*(\overline e_2-\overline e_3)$, where $x^*\in [2,n-2]$ is the multiplicative inverse of $-x$ modulo $n$, so $$x^*x\equiv -1\mod n \quad\mbox{ with $x^*\in [2,n-2]$}.$$
We remark that there is complete symmetry between $\overline e_2$ and $\overline e_3$, for if we swap the role of $\overline e_2$ and $\overline e_3$, all the above setup remains true replacing the arbitrary  $x,\,x^*\in [2,n-2]$ by $n-x,\,n-x^*\in [2,n-2]$.
For each $\overline e_i$, we can choose some $e_i\in G$ such that $$\varphi (e_i)=me_i=\overline e_i\quad\mbox { for $i\in [1,3]$}.$$ For the moment, let this choice of representatives for the $\overline e_i$ be arbitrary except that $$e_3=xe_1+e_2.$$ We will later specialize the values of $e_1$, $e_2$ and (consequently) $e_3$ by exchanging $e_i$ for a more specific element from $e_i+\ker\varphi=e_i+nG$.

 In view of \eqref{S-struct}, we have a decomposition $S=W_0\bdot W_1\bdot W_2\bdot\ldots\bdot W_{2m-2}$ with \begin{align*} \varphi(W_0)=\overline e_1^{[n-1]}\bdot \overline e_2^{[x^*]}\bdot \overline e_3^{[n-x^*]},\quad
&\varphi(W_1)=\overline e_3^{[x^*-1]}\bdot \overline e_2^{[n-x^*-1]}\bdot \overline e_1\bdot (\overline e_2+\overline e_3),\quad \und\\
&\varphi(W_i)\in \{\overline e_1^{[n]},\,\overline e_2^{[n]},\, \overline e_3^{[n]}\} \quad\mbox {for $i\in [2,2m-2]$}.\end{align*} We call any such decomposition of $S$ a \emph{block decomposition} with each $W_i$ a block. Note $\varphi(W_i)$ is zero-sum for all $i\in [0,2m-2]$ when $S=W_0\bdot\ldots\bdot W_{2m-2}$ is a block decomposition in view of the definition of $\overline e_3$ and $x^*$. A decomposition $S=W_0\bdot\ldots\bdot W_{2m-2}$ with each $\varphi(W_i)$ for $i\in [0,2m-2]$ a nontrivial zero-sum subsequence will be called a \emph{weak block decomposition} of $S$. For any such weak block decomposition, the associated sequence $S_\sigma=\sigma(W_0)\bdot\ldots\bdot \sigma(W_{2m-2})\in \Fc(nG)$ must be a minimal zero-sum sequence of length $2m-1$.
Indeed, since $|S_\sigma|=2m-1=\mathsf D((\Z/m\Z)^2)=\mathsf D(nG)$, it contains a nontrivial zero-sum subsequence, say $\prod_{i\in I}^\bullet \sigma(W_{i})$, which cannot be proper lest the subsequence $\prod_{i\in I}^\bullet W_{i}$ contradict that $S$ is a minimal zero-sum sequence.

For the moment, consider an arbitrary block decomposition $S=W_0\bdot\ldots\bdot W_{2m-2}$.
Since Property B holds for $(\Z/m\Z)^2$ and $S_\sigma$ is a minimal zero-sum of length $2m-1$, let  $(f_1,f_2)$ be a basis for $nG\cong (\Z/m\Z)^2$ such that Property A holds for $S_\sigma$.  If possible, choose the block decomposition so that $S_{\sigma}$ has $f_1$ as the  unique term with multiplicity $m-1$. If this is not possible, then choose $(f_1,f_2)$ such that $S_\sigma=f_1^{[m-1]}\bdot f_2^{[m-1]}\bdot (f_1+f_2)$.
Now let $S^*=S_\sigma$ be a fixed sequence for which these conditions hold (so either $f_1$ is the unique term with multiplicity $m-1$ in $S^*$ or $S^*=f_1^{[m-1]}\bdot f_2^{[m-1]}\bdot (f_1+f_2)$). During the proof, we will range over various block decompositions of $S$, but all having the same associated sequence $S_\sigma=S^*$. Likewise, the basis $(f_1,f_2)$ for which Property A holds for $S^*$ will  remain fixed during the proof, except when $S^*=f_1^{[m-1]}\bdot f_2^{[m-1]}\bdot (f_1+f_2)$ when we will apply our arguments using both the basis $(f_1,f_2)$ and the basis $(f_2,f_1)$.

For $g\in \supp(S)$, we have $\varphi(g)\in \{\overline e_1,\overline e_2,\overline e_3, \overline e_2+\overline e_3\}$. If $\varphi(g)=\overline e_i$, then $g=e_i+\psi(g)$ for some $\psi(g)\in \ker \varphi=nG$. If $\varphi(g)=\overline e_2+\overline e_3$, then $g=e_2+e_3+\psi(g)=xe_1+2e_2+\psi(g)$ for some $\psi(g)\in \ker \varphi=nG$. Since $(f_1,f_2)$ is a basis for $nG$, we have each $\psi(g)=yf_1+zf_2$, and set $\psi_1(g)=yf_1$ and $\psi_2(g)=zf_2$. In this way, we define the functions $\psi$, $\psi_1$ and $\psi_2$, which depend upon the choice of representatives $e_1$, $e_2$ and $e_3=xe_1+e_2$, as well as the basis $(f_1,f_2)$ in case $S^*=f_1^{[m-1]}\bdot f_2^{[m-1]}\bdot (f_1+f_2)$.
Additionally, for $k\in [1,3]$ and $i\in [0,2m-2]$, let
\begin{align*}&W_i^{(k)}\mid W_i\mbox{ consist of all terms $g$ with $\varphi(g)=\overline e_k$}\quad\und\\
& S_k\mid S \mbox{ consist of all terms $g$ with $\varphi(g)=\overline e_k$}.
\end{align*}

Every block $W_i$ has  $\sigma(W_i)=f_1$ or $\sigma(W_i)=y_if_1+f_2$ for some $y_i\in [0,m-1]$. We call the former type (I) blocks, and the latter type (II) blocks. Partition $[0,2m-2]=B_{(I)}\cup B_{(II)}$ with $B_{(I)}$ indexing the type (I) blocks and $B_{(II)}$ indexing the type (II) blocks, in which case
$$S^*=S_\sigma=f_1^{[m-1]}\bdot {\prod}^\bullet_{i\in B_{(II)}}(y_if_1+f_2)$$  with $|B_{(II)}|=m$ and  $\Summ{i\in B_{(II)}}y_i\equiv 1\mod m$. In particular, $\sigma(W_i)=y_if_1+f_2$ cannot be constant on all $m$ type (II) blocks $i\in B_{(II)}$, as this would contradict that $\Summ{i\in B_{(II)}}y_i\equiv 1\not\equiv 0\mod m$.
For $j\in B_{(II)}$, set $$F_j=(y_j-1)f_1+f_2.$$
Note, in the case $S_\sigma=f_1^{[m-1]}\bdot f_2^{[m-1]}\bdot (f_1+f_2)$, that exchanging the basis $(f_1,f_2)$ for the basis $(f_2,f_1)$ has the effect  of making the blocks $W_i$ with $\sigma(W_i)=f_1$ into type (II) blocks and the blocks $W_i$ with $\sigma(W_i)=f_2$ into type (I) blocks.

If $W_j$ and $W_k$ are distinct blocks, meaning $j,\,k\in [0,2m-2]$ are distinct, with $T\mid W_j$ and $R\mid W_k$ nontrivial sequences with $\sigma(\varphi(T))=\sigma(\varphi(R))$, then setting $W'_j=W_j\bdot T^{[-1]}\bdot R$, $W'_k=W_k\bdot R^{[-1]}\bdot T$ and $W'_i=W_i$ for $i\neq j,k$ results in a new weak block decomposition $S=W'_0\bdot \ldots\bdot W'_{2m-2}$ with associated sequence $S'_\sigma=\sigma(W'_0)\bdot\ldots\bdot \sigma(W'_{2m-2})$ obtained by \emph{swapping} $T\mid W_j$ with $R\mid W_k$. If we wish to be more specific, we call this a \emph{$(T,R)$-swap}.
The special case when  $W_j=W_0$ with $\varphi(T)=\overline e_1^{[x]}\bdot \overline e_2$ and $\varphi(R)=\overline e_3$ is abbreviated as an $(\overline e_2+,\overline e_3)$-swap, and the case when $W_j=W_0$ with  $\varphi(T)=\overline e_1^{[n-x]}\bdot \overline e_3$ and $\varphi(R)=\overline e_2$ is abbreviated as an $(\overline e_3+,\overline e_2)$--swap.
 Letting $S'_\sigma$ be the associated sequence of the new weak block decomposition, we can apply Lemmas \ref{lemma-pertub-I}, \ref{lemma-pertub-II(weak)} or \ref{lemma-pertub-III(strong)} to deduce restrictions on the value $\sigma(\psi(T))-\sigma(\psi(R))$,  which is possible in view of the hypothesis $m\geq 4$.
 If there is a unique term with multiplicity $m-1$ in $S^*$, then we use Lemma \ref{lemma-pertub-I}. If $S^*=f_1^{[m-1]}\bdot f_2^{[m-1]}\bdot (f_1+f_2)$, then (in general) we must instead use the weaker Lemma \ref{lemma-pertub-II(weak)} unless  $\varphi(T)=\varphi(R)$, in which case we can leverage the fact that every block decomposition has two terms with multiplicity $m-1$ (per our assumptions)  to  gain access to the stronger Lemma \ref{lemma-pertub-III(strong)}.

 \subsection*{Claim A} Let $k\in [1,3]$ and let $I_k\subseteq [0,2m-2]$ be all indices $i$ with $\overline e_k\in \supp(\varphi(W_i))$.

 \begin{itemize}

 \item[1.]If every $i\in I_k$ has $W_i$ of type (I), then $\psi$ is constant on $S_k$.
 \item[2.] If every $i\in I_k$ has $W_i$ of type (II), then $\psi_2$ is constant on $S_k$.
 \item[3.] In all other cases, either $\psi$ is constant on $S_k$ or one of the following scenarios holds.
     \begin{itemize}
     \item[(a)]There is a unique $i_0\in I_k$ with $W_{i_0}$ having type (I), there is some $y\in [0,m-1]$ such that $\sigma(W_i)=yf_1+f_2$ for all $i\in I_k\setminus\{i_0\}$, and there is some $g_0\in \supp(W^{(k)}_{i_0})$  such that $\psi$ is constant on $S_k\bdot g_0^{[-1]}$, say $\psi(g)=z$ for all $g\in \supp(S_k\bdot g_0^{[-1]})$, with $\psi(g_0)=z-F$, where $F=(y-1)f_1+f_2$.
     \item[(b)] There is a unique $j_0\in I_k$ with $W_{j_0}$ having type (II) and some $g_0\in \supp(W^{(k)}_{j_0})$  such that $\psi$ is constant on $S_k\bdot g_0^{[-1]}$, say $\psi(g)=x$ for all $x\in \supp(S_k\bdot g_0^{[-1]})$, with $\psi(g_0)=x+F_{j_0}=x+(y_{j_0}-1)f_1+f_2$.
     \end{itemize}
 \end{itemize}

\begin{proof}
First observe that $0,1\in I_k$ for all $k\in [1,3]$, so $|I_k|\geq 2$.
If $i,j\in I_k$ are distinct with $W_i$ and $W_j$ both of type (I),  and $g\in \supp(W^{(k)}_i)$ and $h\in \supp(W^{(k)}_j)$, then performing a $(\overline e_k,\overline e_k)$-swap using the terms $g$ and $h$ results in a new block decomposition with associated sequence $S'_\sigma$. We can apply Lemma \ref{lemma-pertub-I} or Lemma \ref{lemma-pertub-III(strong)} to conclude $S'_\sigma=S_\sigma$ and $\psi(g)=\psi(h)$. Thus, performing all possible $(\overline e_k,\overline e_k)$-swaps between $W_i$ and $W_j$, we find that $\psi$ is constant on all terms from $S_k$ contained in these blocks. If all blocks $W_i$ with $i\in I_k$ have type (I), applying this argument to all possible pairs shows that $\psi$ is constant on $S_k$, which completes Item 1. If $i,j\in I_k$ are distinct with $W_i$ and $W_j$ both of type (II),  and $g\in \supp(W^{(k)}_i)$ and $h\in \supp(W^{(k)}_j)$, then performing a $(\overline e_k,\overline e_k)$-swap using the terms $g$ and $h$ results in a new block decomposition with associated sequence $S'_\sigma$. We can apply Lemma \ref{lemma-pertub-I} or Lemma \ref{lemma-pertub-III(strong)} to  conclude  $\psi_2(g)=\psi_2(h)$. If all blocks $W_i$ with $i\in I_k$ have type (II), applying this argument to all possible pairs shows that $\psi_2$ is constant on $S_k$, which completes Item 2. It remains to prove Item 3.

Let $I_k=I^{(I)}_k\cup I^{(II)}_k$ with $I_k^{(I)}$ indexing all blocks $W_i$ having type (I), and $I_k^{(II)}$ indexing all blocks $W_i$ having type (II). By hypothesis of Item 3, both  $I_k^{(I)}$ and $I_k^{(II)}$ are nonempty. If $|I_k^{(I)}|\geq 2$, then the argument of the previous paragraph shows $\psi$ is constant on all terms from $S_k$ contained in the blocks $W_i$ with $i\in I_k^{(I)}$.  If $|I_k^{(II)}|\geq 2$, then the argument of the previous paragraph shows $\psi_2$ is constant on all terms from $S_k$ contained in the blocks $W_i$ with $i\in I_k^{(II)}$.

The following arguments from here to the beginning of the Case A.1 will be repeatedly used in what follows  and referenced as the \textbf{initial arguments}. Consider an arbitrary $i\in I_k^{(I)}$ and $j\in I_k^{(II)}$. Then $\sigma(W_i)=f_1$ and  $\sigma(W_j)=y_jf_1+f_2$ with $F_j=(y_j-1)f_1+f_2$.
Let $g\in \supp(W^{(k)}_i)$ and $h\in \supp(W^{(k)}_j)$. Performing a $(\overline e_k,\overline e_k)$-swap using the terms $g$ and $h$ results in a new block decomposition with associated sequence $S'_\sigma$. We can apply Lemma \ref{lemma-pertub-I} or Lemma \ref{lemma-pertub-III(strong)} to conclude $S'_\sigma=S_\sigma$ and either $\psi(h)=\psi(g)$ or $\psi(h)=\psi(g)+F_j$. Fixing $g$, say  with $\varphi(g)=x$, and ranging over all possible $h\in \supp(W^{(k)}_j)$, we conclude that $\supp(\psi(W^{(k)}_j))\subseteq  \{x,x+F_j\}$. Likewise, fixing  $h$, say  with $\varphi(h)=z$, and ranging over all possible $g\in \supp(W^{(k)}_i)$, we conclude that $\supp(\psi(W^{(k)}_i))\subseteq  \{z,z-F_j\}$. In particular, $|\supp(\psi(W_i^{(k)}))|\leq 2$ and $|\supp(\psi(W_j^{(k)}))|\leq 2$.

If equality holds in one of these estimates, say  $|\supp(\psi(W_j^{(k)}))|=2$, then $\supp(\psi(W_j^{(k)}))=\{x,x+F_j\}$ with $x\in \supp(\psi(W_i^{(k)}))$. Choosing $h\in \supp(W^{(k)}_j)$ with  $\psi(h)=x+F_j$ and then with $\psi(h)=x$, and applying the argument of the previous paragraph in both cases, we find that $\supp(\psi(W^{(k)}_i))\subseteq \{\psi(h),\psi(h)-F_j\}= \{x+F_j,x\}$ and  $\supp(\psi(W^{(k)}_i))\subseteq \{\psi(h),\psi(h)-F_j\}= \{x, x-F_j\}$. Thus $\supp(\psi(W_i^{(k)}))$ is contained in the intersection of these two sets, and   since $\ord(F_j)=m\geq 3$, we have $x-F_j\neq x+F_j$, meaning  $\supp(\psi(W^{(k)}_i))=\{x\}$. In this case,   $\supp(\psi(W^{(k)}_i\bdot W^{(k)}_j))\subseteq \{x, x+F_j\}$ with $\supp(\psi(W_i^{(k)}))=\{x\}$. Likewise, if $|\supp(\psi(W_i^{(k)}))|=2$, then this same argument shows
 $\supp(\psi(W^{(k)}_i\bdot W^{(k)}_j))\subseteq \{z, z-F_j\}$ with $\supp(\psi(W_j^{(k)}))=\{z\}$.
 In the remaining case when $\psi$ is constant both on $W^{(k)}_i$ and also on $W^{(k)}_j$, say with $\supp(\psi(W_i^{(k)}))=\{x\}$ and $\supp(\psi(W_j^{(k)}))=\{z\}$, then we instead find  $z\in \{x,x+F_j\}$.

If $|W_i^{(k)}|\geq 2$ and $\supp(\psi(W_i^{(k)}))=\{x\}$, then the above work shows that $\supp(\psi(W_j^{(k)}))\subseteq \{x,x+F_j\}$. If there were also at least two terms $h_1\bdot h_2\mid W_j^{(k)}$ with $\psi(h_1)=\psi(h_2)=x+F_j$, then we could perform a $(\overline e_k^{[2]},\overline e_k^{[2]})$-swap using these terms and two terms $g_1\bdot g_2\mid W_i^{(k)}$ (possible as  $|W_i^{(k)}|\geq 2$), and then apply Lemma \ref{lemma-pertub-I} or Lemma \ref{lemma-pertub-III(strong)} to the resulting associated sequence $S'_\sigma$ to find that $S_\sigma=S'_{\sigma}$ with $2F_j=\psi(h_1)+\psi(h_2)-\psi(g_1)-\psi(g_2)\in \{0, F_j\}$, contradicting that $\ord(F_j)=m\geq 3$.
Therefore we conclude that at most one term $g_0$ of $W_j^{(k)}$ can have $\psi(g_0)=x+F_j$.  If $|W_j^{(k)}|\geq 2$ and $\supp(\psi(W_j^{(k)}))=\{z\}$, then repeating this argument with the roles of $i$ and $j$ swapped, we conclude that $\supp(\psi(W_i^{(k)}))\subseteq \{z,z-F_j\}$ and that at most one term $g_0$ of $W_i^{(k)}$ can have $\psi(g_0)=z-F_j$.

\subsection*{Case A.1} $|I_k^{(I)}|\geq 2$ and $|I_k^{(II)}|\geq 2$

In this case,  our initial arguments show $\psi$ is constant on all terms of $S_k$ contained in type (I) blocks, say equal to $x$, while $\psi_2$ is constant on all terms of $S_k$ contained in type (II) blocks with $\psi(h)\in \{x,x+F_j\}$ for any $h\in \supp(W_j^{(k)})$ with $j\in I_k^{(II)}$.
Since $\psi_2(x)\neq \psi_2(x+F_j)$, we further conclude that either $\psi(h)=x$ for all  terms of $S_k$ contained in type (II) blocks, or else $\psi(h)=x+F_j$ whenever $h\in \supp(W_j^{(k)})$ with $j\in I_k^{(II)}$.  Since $|I_k^{(I)}|,\,|I_k^{(II)}|\geq 2$ and $W_1$ is the only possible block with $|W_1^{(k)}|=1$, there is at least one $i\in I_k^{(I)}$ with $|W_i^{(k)}|\geq 2$ and at least one $j\in I_k^{(II)}$ with $|W_j^{(k)}|\geq 2$. We have  $\supp(\psi(W_i^{(k)}))=\{x\}$ (as noted above), and now our initial arguments ensure that  $\psi$ cannot be constant and equal to $x+F_j$ on $W_j^{(k)}$, meaning we instead have $\psi(h)=x$ for all  terms of $S_k$ contained in type (II) blocks, and thus also on all of $S_k$, as desired.

\subsection*{Case A.2} $|I_k^{(I)}|\geq 2$ and $|I_k^{(II)}|=1$

In this case,  our initial arguments show $\psi$ is constant on all terms of $S_k$ contained in type (I) blocks, say equal to $x$, while $\psi(h)\in \{x,x+F_j\}$ for any $h\in \supp(W_{j_0}^{(k)})$, where $j_0$ is the unique element of $I_k^{(II)}$. Since $|I_k^{(I)}|\geq 2$ and $W_1$ is the only possible block with $|W_1^{(k)}|=1$, there is at least one $i\in I_k^{(I)}$ with $|W_i^{(k)}|\geq 2$, and we have $\supp(\psi(W_i^{(k)}))=\{x\}$ (as just noted). Our initial arguments now ensure that there is at most one term $g_0$ of $W_{j_0}^{(k)}$ with $\psi(g_0)=x+F_j$ while $\psi(h)=x$ for all other terms $h$ of $W_{j_0}^{(k)}$.  Thus either $\psi$ is constant on $S_k$ or Item 3(b) follows.

\subsection*{Case A.3} $|I_k^{(I)}|=1$ and $|I_k^{(II)}|\geq 2$

In this case, our initial arguments show  $\psi_2$ is constant on all terms of $S_k$ contained in type (II) blocks.  Moreover, for $j\in I_k^{(II)}$,  $\psi$ can take on at most two values over all the terms from $W_j^{(k)}$, and if two values occur, then their difference must be $\pm F_j$. However, since $\psi_2$ is constant on $W_k^{(j)}$ with $\psi_2(F_j)\neq 0$,  this latter possibility cannot occur, meaning $\psi$ is constant on each $W_j^{(k)}$ with $j\in I_k^{(II)}$, say $\supp(\psi(W_j^{(k)}))=\{z_j\}$.
Let $i_0$ be the unique index contained in $I_k^{(I)}$.

Suppose $|\supp(\psi(W_{i_0}^{(k)}))|\geq 2$. Then $\supp(\psi(W_{i_0}^{(k)}))=\{z_j,z_j-F_j\}$ for every $j\in I_k^{(II)}$ (by our initial arguments). The difference of these two elements is $F_j= (y_j-1)f_1+f_2$, and since  $\ord(f_2)=m\geq 3$, the order of which element to subtract from the other and have the resulting difference lie in $\la f_1\ra+f_2$ is determined. As this must hold for all $j\in I_k^{(II)}$, we conclude that $F_j=(y_j-1)f_1+f_2$ is constant over all $j\in I_k^{(II)}$, say with $y_j=y$ and $F_j=F=(y-1)f_1+f_2$ for all $j\in I_k^{(II)}$.
But now $\supp(\psi(W_{i_0}^{(k)}))=\{z_j,z_j-F\}$ for every $j\in I_k^{(II)}$, forcing $z_j=z$ to be constant over all $j\in I_k^{(II)}$ (as $\ord(F)=\ord(f_2)=m\geq 3$). Since $|I_k^{(II)}|\geq 2$ and $W_1$ is the only possible block with $|W_1^{(k)}|=1$, it follows that there is some $j\in I_k^{(II)}$ with $|W_j^{(k)}|\geq 2$, and $\supp(\psi(W_j^{(k)}))=\{z\}$ as already noted. Thus  our initial  arguments ensure  there is some $g_0\in \supp(W_{i_0})$ with $\psi(g_0)=z-F$ and $\psi$ constant on all of $S_k\bdot g_0^{[-1]}$, equal to $z$. In such case, Item 3(a) follows. So we can instead now assume $|\supp(\psi(W_{i_0}^{(k)}))|=1$, meaning  $\psi$ is constant on all terms from $W_j^{(k)}$ for any $j\in I_k$, including $j=i_0$.

Since $\psi$ is constant of $W_{i_0}^{(k)}$, we have    $\supp(\psi(W_{i_0}^{(k)}))=\{x\}$ for some $x$. Moreover, for each $j\in I_k^{(II)}$, we have $x\in \{z_j,z_j-F_j\}$ (by our initial arguments). If we always have $x=z_j$, then $\psi$ is constant on $S_k$, equal to $x$, as desired. Therefore we can instead assume there is some $j\in I_k^{(II)}$ such that $x=z_j-F_j$. Let $h\in \supp(W_j^{(k)})$ and $g\in \supp(W_{i_0}^{(k)})$, so $$\psi(g)=x=z_j-F_j\quad\und\quad \psi(h)=z_j.$$ We can perform a type $(\overline e_k,\overline e_k)$-swap using the terms $g$ and $h$. If we do so, the resulting new block decomposition has associated sequence $S'_\sigma=S_\sigma$ with the type of the blocks $W_{i_0}$ and $W_j$ swapping. Thus the new block $W'_{i_0}=W_{i_0}\bdot g^{[-1]}\bdot h$ has type (II) and the new block $W'_{j}=W_j\bdot h^{[-1]}\bdot g$ has type (I). Moreover,  $S'_\sigma=S_\sigma$ with the new block decomposition also satisfying the hypotheses of Case A.3. Thus by our above work applied to the new block decomposition, we conclude that $\psi$ is constant on all terms from $S_k$ in  any  block $W'_i$ with $i\in I_k$, and thus must be constant on all terms from $S_k$ contained in $W'_{i_0}=W_{i_0}\bdot g^{[-1]}\bdot h$ and also constant on all terms from $S_k$ contained in $W'_j=W_j\bdot h^{[-1]}\bdot g$. However, $h\in \supp(W'_{i_0})$ is one such term from $S_k$ with $\psi(h)=z_j$, while all other terms $g'$ from $S_k$ in $W'_{i_0}=W_{i_0}\bdot g^{[-1]}\bdot h$  have $\psi(g')=x=z_j-F_j\neq z_j$. As a result, we obtain a contradiction unless $h$ is the unique term of $W_{i_0}$ from $S_k$, which is only possible if $|W_{i_0}^{(k)}|=1$ with $i_0=1$.
Likewise, $\psi$ is constant on all terms from $S_k$ contained in $W'_j=W_j\bdot h^{[-1]}\bdot g$. However, $g\in \supp(W'_{i_0})$ is one such term from $S_k$ with $\psi(g)=x=z_j-F_j$, while all other terms $h'$ from $S_k$ in $W'_{j}=W_j\bdot h^{[-1]}\bdot g$  have $\psi(h')=z_j\neq z_j-F_j$. As a result, we obtain a contradiction unless $g$ is the unique term of $W_{j}$ from $S_k$, which is only possible if $|W_{j}^{(k)}|=1$ with $j=1$. Since we cannot have $i_0=j=1$ (as $i_0$ and $j$ are distinct), we obtain a contradiction.

\subsection*{Case A.4} $|I_k^{(I)}|=|I_k^{(II)}|=1$

We will show that $|I_k|\geq 3$ must hold for all $k\in [1,3]$, which will ensure the hypotheses of Case A.4 do not hold (else the proof is complete). If this fails,  there are $2m-3$ blocks not indexed by $I_k=\{0,1\}$, so the Pigeonhole Principle ensures that there is some $k'\in [1,3]\setminus \{k\}$ with $|I_{k'}\setminus \{0,1\}|\geq m-1$. Since $0,1\in I_{k'}$ (as this is the case for any $k'\in [1,3]$), we have $|I_{k'}|\geq m+1\geq 3$, allowing us to  apply the completed portion of Claim A to $k'$. Since there are only $m$ type (II) blocks and $|I_{k'}|\geq m+1$, not all blocks indexed by $I_{k'}$ can have type (II), meaning Item 2 cannot hold for $k'$. Since there are only $m-1$ type (I) blocks and $|I_{k'}|\geq m+1$, it follows that   $|I^{(II)}_{k'}|\geq 2$, meaning Item 3(b) cannot hold for $k'$. If  $|I^{(I)}_{k'}|=1$, then, since there are only $m$ type (II) blocks, we find that $m+1\leq |I_{k'}|\leq m+1$, which is only possible if $|I_{k'}|=m+1$ with $I_{k'}$ containing all indices of type (II) blocks.
However, since $\sigma(W_j)=y_jf_1+f_2$ is not constant on all type (II) blocks (as noted in Section \ref{sec-intro}), we see that Item 3(a) cannot hold for $k'$. The only possibility in Claim A that is not ruled out for $k'$ is that $\psi$ is constant on $S_{k'}$, in which case all terms of $S_{k'}$ are equal, meaning $S$ contains a term with multiplicity $|S_{k'}|\geq (|I_{k'}|-2)n+n-1\geq  (m-1)n+n-1=mn-1$. In such case, Property A holds for $S$ per the comments from Section \ref{sec-intro}, as desired. This shows $|I_k|\geq 3$ (else the proof is complete) and  completes Claim A.
\end{proof}

\subsection*{Claim B} $|I_k|\geq 3$ for every $k\in [1,3]$.

\begin{proof}
This follows by the argument just given in Case A.4.
\end{proof}

\subsection*{Claim C} There is a block decomposition of $S$ with $S_\sigma=S^*$ such that either $W_0$ or $W_1$ has type (I). In case $S^*=f_1^{[m-1]}\bdot f_2^{[m-1]} \bdot (f_1+f_2)$, this holds for the basis $(f_1,f_2)$ as well as the basis $(f_2,f_1)$, though not necessarily with the same block decomposition.

\begin{proof}
Assume by contradiction that  the claim is false, meaning every block decomposition has $W_0$ and $W_1$ always of type (II).
Consider an arbitrary $k\in [1,3]$ for which there is a type (I) block $W_j$   containing a term from $S_{k}$. Performing  $(\overline e_{k},\overline e_{k})$-swaps between any such type (I) block $W_j$ and either $W_0$ or $W_1$ shows via Lemma \ref{lemma-pertub-I} or Lemma \ref{lemma-pertub-III(strong)} that $\psi$ is constant on all terms from $S_{k}$ contained in $W_0\bdot W_1\bdot\prod_{i\in B_{(I)}}W_i$, else we could  perform such swaps and change the type of $W_0$ or $W_1$ from (II) to (I), contrary to assumption.
As a result, all such terms of $S$ are equal to each other. If there is only one $k\in [1,3]$ for which there is a type (I) block $W_j$ containing a term from $S_{k}$, then all $(m-1)n$ terms of $S_{k}$ contained in the type (I) blocks as well as at least $n-1$ terms of $S_{k}$ contained in $W_0\bdot W_1$ will be equal to the same element, yielding a term in $S$ with multiplicity at least $(m-1)n+n-1=mn-1$, in which case  Property A holds for $S$ as discussed in Section \ref{sec-intro},  as desired. We may therefore instead assume otherwise that the type (I) blocks contain terms from $S_k$ for at least two distinct value of $k\in [1,3]$, meaning there is some $k\in \{2,3\}$ such that the type (I) blocks contain a term from $S_k$.

Performing $(\overline e_j,\overline e_j)$-swaps between the type (II) blocks $W_0$ and $W_1$, for $j\in [1,3]$, shows via Lemma \ref{lemma-pertub-I} or Lemma \ref{lemma-pertub-III(strong)} that $\psi_2$ is constant on all terms of $S_j$ contained in $W_0\bdot W_1$.
By choosing the representatives $e_1$, $e_2$ and $e_3$, appropriately, we can thus assume $$\psi_2(g)=0\quad\mbox{ for all $g$ from $S_1\bdot S_2$ contained in $W_0\bdot W_1$}.$$  Performing type  $(\overline e_2+,\overline e_3)$-swaps between $W_0$ and $W_1$, we conclude from Lemma \ref{lemma-pertub-I} or Lemma \ref{lemma-pertub-II(weak)}  that $\psi(g)=(e_3+\psi(g))-(xe_1+e_2)\in \la f_1\ra$ for every $g$ from $W^{(3)}_1$. Thus $\psi_2(g)=0$ for all $g$ from  $W^{(3)}_1$, and since $\psi_2$ is constant on all terms from $S_3$ contained in $W_0\bdot W_1$, it follow that  $$\psi_2(g)=0\quad\mbox{ for all $g$ from $S_3$ contained in $W_0\bdot W_1$}.$$ Likewise, performing   $(\overline e_3+,\overline e_2)$-swaps
 between $W_0$ and $W_1$ and using Lemma \ref{lemma-pertub-I} or Lemma \ref{lemma-pertub-II(weak)} implies $$ne_1=((n-x)e_1+e_3)-e_2\in \la f_1\ra.$$
 As shown at the start of the claim, there is some $k\in \{2,3\}$ for which there is a type (I) block $W_j$ containing a term from $S_{k}$ with  $\psi$ constant on the terms of $W_0\bdot W_j$ contained in $S_{k}$, and thus $\psi_2$ is constant and equal to zero on all such terms (as we have $\psi_2(g)=0$ for all $g\in \supp(W_0)$). It follows that $f_1=\sigma(W_j)\in ne_{k}+\la f_1\ra$, whence either $ne_2\in \la f_1\ra$ or $ne_3\in \la f_1\ra$ (as $k\in \{2,3\}$).
 Since $e_3=xe_1+e_2$ and $ne_1\in \la f_1\ra$, it follows in either case that $$ne_1,ne_2,ne_3\in \la f_1\ra.$$ Let $W_{j'}$ be a type (II) block with $j'\in [2,2m-2]$ (possible as there are at least $m\geq 3$ such blocks) and let $k'\in [1,3]$ be the index such that $\varphi(W_{j'})=\overline e_{k'}^{[n]}$. Performing
 $(\overline e_{k'},\overline e_{k'})$-swaps between $W_0$ and $W_j$ shows via Lemma \ref{lemma-pertub-I} or Lemma \ref{lemma-pertub-III(strong)} that $\psi_2$ is constant on all terms of $W_0\bdot W_j$ from $S_{k'}$, and thus equal to $0$ since $\psi_2(g)=0$ for all $g$ from $W_0$. In such case, $y_jf_1+f_2=\sigma(W_j)\in ne_{k'}+\la f_1\ra=\la f_1\ra$, with the latter inequality in view of $ne_{k'}\in\la f_1\ra$, which is not possible, concluding Claim C.
 \end{proof}

\subsection*{Claim D} Let $k\in [1,3]$. If  $\psi$ is not constant on $S_k$, then  there is some term $g_k\in \supp(S_k)$ such that either Claim A.3(a) holds with $g_0=g_k$ for every block decomposition with $S_\sigma=S^*$ or such that Claim A.3(b) holds with $g_0=g_k$ for every block block decomposition with $S_\sigma=S^*$. In case $S^*=f_1^{[m-1]}\bdot f_2^{[m-1]} \bdot (f_1+f_2)$, this holds for the basis $(f_1,f_2)$ as well as the basis $(f_2,f_1)$, which ensures that  $\sigma(W_j)\neq f_1+f_2$ for any $j\in I_k$

\begin{proof}
Let $S=W'_0\bdot \ldots\bdot W'_{2m-2}$ be a block decomposition satisfying Claim C with associated sequence $S'_\sigma=S^*$. Let $I'_k\subseteq [0,2m-2]$ be all indices $i$ such that $\varphi(W'_i)$ contains a term equal to $\overline e_k$. Then  $0,1\in I'_k$ as this is the case for any $k\in [1,3]$, and at at least one of the blocks $W'_0$ or $W'_1$  has type (I) by Claim C. As a result, since  $\psi$ is not constant on $S_k$ yet either $W'_0$ or $W'_1$ has type (I), it follows that  Claim A.3(a) or A.3(b) must hold. In particular, there is some $g_k\in \supp(S_k)$ such that $\psi$ is constant on $S_k\bdot g_k^{[-1]}$, and since $|S_k|\geq (|I'_k|-2)n+n-1\geq 2n-1\geq 3$ by Claim B, the possibility for $g_k$ is uniquely defined irrespective of the block decomposition as the unique terms of $S_k$ whose $\psi$ value is different from the $\psi$ values of all other terms of $S_k$.
 Moreover, if Claim A.3(a) holds, then $\psi(g_k)-\psi(g)\in \la f_1\ra-f_2$ for all $g\in \supp(S_k\bdot g_k^{[-1]})$, while if Claim A.3(b) holds, then $\psi(g_k)-\psi(g)\in \la f_1\ra+f_2$ for all $g\in \supp(S_k\bdot g_k^{[-1]})$.
Now consider an arbitrary block decomposition $S=W_0\bdot\ldots \bdot W_{2m-2}$ with associated sequence $S_\sigma=S^*$. Apply Claim A to this block decomposition. Since $\psi$ is not constant on $S_k$, either Claim A.2, A.3(a) or A.3(b) holds. If  Claim A.2 holds, then $\psi(g_k)-\psi(g)\in \la f_1\ra$ for all $g\in \supp(S_k\bdot g_k^{[-1]})$, contradicting that $\psi(g_k)-\psi(g)\in \la f_1\ra\pm f_2$ for  $g\in \supp(S_k\bdot g_k^{[-1]})$. Therefore Claim A.3 holds and  there is some $h_k\in \supp(S_k)$ such that $\psi$ is constant on $S_k\bdot h_k^{[-1]}$.
Since $|S_k|\geq (|I_k|-2)n+n-1\geq 2n-1\geq 3$ by Claim B, we must have $h_k=g_k$. Moreover, if Claim A.3(a) holds for the block decomposition $S=W_0\bdot\ldots\bdot W_{2m-2}$, then $\psi(g_k)-\psi(g)\in \la f_1\ra-f_2$ for all $g\in \supp(S_k\bdot g_k^{[-1]})$, while if Claim A.3(b) holds for the block decomposition $S=W_0\bdot\ldots\bdot W_{2m-2}$, then $\psi(g_k)-\psi(g)\in \la f_1\ra+f_2$ for all $g\in \supp(S_k\bdot g_k^{[-1]})$. As $\ord(f_2)=m\geq 3$, the cosets $\la f_1\ra-f_2$ and $\la f_1\ra+f_2$ are disjoint, meaning the same possibility must occur for every block decomposition $S=W_0\bdot \ldots\bdot W_{2m-2}$. Thus either Claim A.3(a) holds for every such block decomposition, or else Claim A.3(b) does, establishing the first part of the claim.

Now assume $S^*=f_1^{[m-1]}\bdot f_2^{[m-1]}\bdot (f_1+f_2)$ and that there is a block decomposition $S=W_0\bdot \ldots\bdot W_{2m-2}$ with $S_\sigma=S^*$ and $\sigma(W_j)=f_1+f_2$ for some $j\in I_k$. By the first part of the claim applied using the basis $(f_1,f_2)$, either Claim A.3(a) or Claim A.3(b) holds for this block decomposition.
If Claim A.3(b) holds using the basis $(f_1,f_2)$, then $\sigma(W_i)=f_1$ for all $i\in I_k\setminus \{j\}$, and we have  $g_k\in \supp(W_j)$.
In such case, with respect to the basis $(f_2,f_1)$, all blocks $W_i$ with $i\in I_k$ have type (II), contrary to first part of claim applied using the basis $(f_2,f_1)$. Therefore  we must instead have  Claim A.3(a) holding using the basis $(f_1,f_2)$. In this case, there is a unique $i_0\in I_k$ with $\sigma(W_{i_0})=f_1$, and we have $g_k\in \supp(W_{i_0})$. With respect to the basis $(f_2,f_1)$, both $W_{i_0}$ and $W_j$ are type (II) blocks. Thus if we apply the  first part of the claim instead  using the basis $(f_2,f_1)$, we find that Claim A.3(a) must again hold,  there is  a unique $j_0\in I_k$ with  $\sigma(W_{j_0})=f_2$, and  $g_k\in \supp(W_{j_0})$. However, since $\sigma(W_{j_0})=f_2\neq f_1=\sigma(W_{i_0})$, we have $i_0\neq j_0$, making it impossible for term $g_k$ (which is the unique term of $S_k$ whose $\psi$ value differs from all other terms in $S_k$) to be contained in both $\supp(W_{i_0})$ and $\supp(W_{j_0})$. So we instead conclude $\sigma(W_j)\neq f_1+f_2$ for all $j\in I_k$, completing the claim.
\end{proof}

\subsection*{Claim E} For each $k\in [1,3]$, $\psi$ is constant on $S_k$.

\begin{proof}
In view of Claim D, for each $k\in [1,3]$, there is some $g_k\in \supp(S_k)$ such that $\psi$ is constant on $S_k\bdot g_k^{[-1]}$. By choosing the representatives $e_1$, $e_2$ and $e_3=xe_1+e_2$ appropriately, we can assume
$$\psi(g)=0\mbox{  for all $g\in \supp(S_1\bdot S_2\bdot g_1^{[-1]}\bdot g_2^{[-1]})$}\quad\und\quad \psi(g)=\alpha\mbox{ for all $g\in \supp(S_3\bdot g_3^{[-1]})$}.$$
Let $J\subseteq [1,3]$ be those indices $k$ such that $\psi$ is not constant on $S_k$ (so $k\in J$ means $\psi(g_k)\neq 0$ if $k\in [0,1]$ and $\psi(g_k)\neq \alpha$ if $k=3$). Assume by contradiction that $J$ is nonempty. In view of Claim C, there is a block decomposition $S=W_0\bdot\ldots\bdot W_{2m-2}$ with at least one of $W_0$ or $W_1$ having type (I), and Claim D ensures that either Claim A.3(a) or Claim A.3(b) holds for this block decomposition. The conditions given in Claim A.3(a) as well as  Claim A.3(b) ensure that if $k\in J$ with $g_k$ contained in the block $W_i$ and $g$ any other  term of $S_k$ contained in a distinct block $W_j$, then performing a $(\overline e_k,\overline e_k)$-swap using $g_k$ and $g$ results in a new block decomposition whose associated sequence remains unchanged and equal to $S^*$ with the types on the blocks $W_i$ and $W_j$ swapping.
If $W_0$ and $W_1$ both have type (I), then Claim A.3 ensures that all blocks containing one of the terms $g_k$, for $k\in J$, must have type (II). As $J$ is nonempty, we can thus perform a type $(\overline e_k,\overline e_k)$-swap between $g_k$ and a term $g$ from $W_0$ with $\varphi(g)=\overline e_k$,
  resulting in a new block decomposition with the type of $W_0$ changing from (I) to (II). This allows us to w.l.o.g. assume one of  $W_0$ and $W_1$ has type (I) and the other has type (II). Then any $k\in I_k$  having a unique block of type (I) that contains terms from $S_k$ or having a unique block of type (II) that contains terms from $S_k$ must have that block being either $W_0$ or $W_1$.  In consequence, it follows from Claim A.3 that  $g_k\in \supp(W_0\bdot W_1)$ for all $k\in J$, and by performing type $(\overline e_k,\overline e_k)$-swaps between $W_0$ and $W_1$, we can then also assume $g_k\in \supp(W_0)$ for all $k\in J$, in which case Claim A.3 ensures $$\psi(g_k)=F\mbox{ for $k\in J\cap [0,1]$}\quad\und\quad\psi(g_k)=\alpha+F\mbox{  when $k=3\in J$},$$ where $F=\sigma(W_0)-\sigma(W_1)\in \la f_1\ra \pm f_2$ with $\pm=+$ when $W_0$ has type (II) and $\pm=-$ when $W_0$ has type (I).

\subsection*{Case E.1:} $W_0$ has type (II).

In this case, $F=(y_0-1)f_1+f_2\in \la f_1\ra+f_2$. If $S^*=f_1^{[m-1]}\bdot f_2^{[m-1]}\bdot (f_1+f_2)$, then Claim D ensures that $y_0=0$, in which case $F=-f_1+f_2$. Let $k\in J$. Since $g_k\in \supp(W_0)$, it follows that Claim A.3(b) holds with  $W_0$ the unique block having type (II) that contains a term from $S_k$. Thus, in view of Claim B, it follows that there is some $j\in I_k\setminus\{0,1\}$ with $W_j$ having type (I). Since $x,x^*\in [2,n-2]$, $\vp_{e_1}(W_0)=n-1>x$ and $\vp_{e_2}(W_0)=x^*>1$, it follows that we can perform a type $(\overline e_2+,\overline e_3)$-swap between $W_0$ and $W_1$ that does not use any of the terms $g_i$ with $i\in J$. Applying Lemma \ref{lemma-pertub-I} or Lemma \ref{lemma-pertub-II(weak)} to the resulting weak block decomposition (and using that neither $\sigma(W_1)$ nor $\sigma(W_2)$ can equal $f_1+f_2$ if $S^*=f_1^{[m-1]}\bdot f_2^{[m-1]}\bdot (f_1+f_2)$ by Claim D), we find that $$-\alpha=(xe_1+e_2)-(e_3+\alpha)\in \{0,F\}.$$
Likewise, Since $x,n-x^*\in [2,n-2]$, $\vp_{e_1}(W_0)=n-1>n-x$ and $\vp_{e_3}(W_0)=n-x^*>1$, it follows that we can perform a type $(\overline e_3+,\overline e_2)$-swap between $W_0$ and $W_1$ that does not use any of the terms $g_i$ with $i\in J$ and then use  Lemma \ref{lemma-pertub-I} or Lemma \ref{lemma-pertub-II(weak)} to conclude that $$ne_1+\alpha=((n-x)e_1+e_3+\alpha)-e_2\in \{0,F\}.$$
We can also perform a type $(\overline e_k,\overline e_k)$-swap between $W_0$ and $W_j$ using $g_k$ and any $g\in \supp(W_j)$ resulting in a new block decomposition $S=W'_0\bdot\ldots\bdot W'_{2m-2}$ with $W'_0=W_0\bdot g_k^{[-1]}\bdot g$, $W'_j=W_j\bdot g^{[-1]}\bdot g_k$ and $W'_i=W_i$ for $i\neq 0,j$. As before, the associated sequence $S'_\sigma=S_\sigma=S^*$ remains fixed with the types of $W_0$ and $W_j$ swapping, so $W'_0$ has type (I) and $W'_j$ has type (II). But now, performing a type $(\overline e_2+,\overline e_3)$-swap between the type (I) blocks $W'_0$ and $W'_j$ and applying Lemma \ref{lemma-pertub-I} or Lemma \ref{lemma-pertub-II(weak)} to the resulting weak block decomposition implies that either $-\alpha=(xe_1+e_2)-(e_3+\alpha)=0$ or else $S^*=f_1^{[m-1]}\bdot f_2^{[m-1]}\bdot (f_1+f_2)$ with $-\alpha=(xe_1+e_2)-(e_3+\alpha)\in \la f_2\ra$. However, in the latter case, we have $F=-f_1+f_2$ with $-\alpha\in \{0,F\}$, which combines with $-\alpha\in \la f_2\ra$ to also yield $\alpha=0$. Thus $$\alpha=0$$ holds in all cases.  Likewise,
performing a type $(\overline e_3+,\overline e_2)$-swap between the type (I) blocks $W'_0$ and $W'_j$ and applying Lemma \ref{lemma-pertub-I} or Lemma \ref{lemma-pertub-II(weak)} to the resulting weak block decomposition implies that either $ne_1+\alpha=((n-x)e_1+e_3+\alpha)-e_2=0$ or else $S^*=f_1^{[m-1]}\bdot f_2^{[m-1]}\bdot (f_1+f_2)$ with $ne_1+\alpha\in \la f_2\ra$. However, in the latter case, we have $F=-f_1+f_2$ with $ne_1+\alpha\in \{0,F\}$, which combines with $ne_1+\alpha\in \la f_2\ra$ to also yield $ne_1+\alpha=0$. Thus $ne_1+\alpha=0$ holds in all cases, and since $\alpha=0$, we are left to conclude that $ne_1=0$. However, since $\psi(g)=0$ for all but at most one term of $S_1$ (namely $g_1$), it follows from Claim B that $\vp_{e_1}(S)\geq |S_1|-1\geq (|I_1|-2)n+n-1\geq 2n-1\geq n$, which means $S$ contains the proper zero-sum subsequence $e_1^{[n]}$, contradicting that $S$ is a minimal zero-sum sequence, which completes Case E.1.

\subsection*{Case E.2:} $W_0$ has type (I).

In this case, $F=(1-y_1)f_1-f_2\in \la f_1\ra-f_2$. If $S^*=f_1^{[m-1]}\bdot f_2^{[m-1]}\bdot (f_1+f_2)$, then Claim D ensures that $y_1=0$, in which case $F=f_1-f_2$. Let $k\in J$. Arguing as in Case E.1, we can perform a type $(\overline e_3+,\overline e_2)$-swap  and a   type $(\overline e_2+,\overline e_3)$-swap between $W_0$ and $W_1$, neither of which uses any of the terms $g_i$ with $i\in J$, and then use  Lemma \ref{lemma-pertub-I} or Lemma \ref{lemma-pertub-II(weak)} to conclude that $$-\alpha=(xe_1+e_2)-(e_3+\alpha)\in \{0,F\}\quad\und\quad ne_1+\alpha=((n-x)e_1+e_3+\alpha)-e_2\in \{0,F\}.$$ If $\alpha=0$ and $ne_1+\alpha=0$, then $ne_1=0$ and we obtain the same contradiction that completed Case E.1.

If $\alpha=-F$ and $ne_1+\alpha=0$, or if $\alpha=0$ and $ne_1+\alpha=F$,  then $ne_1=F=(1-y_1)f_1-f_2$. If $\alpha=-F$ and $ne_1+\alpha=F$, then $ne_1=2F=2(1-y_1)f_1-2f_2$.
In either case,  in view of Claim B, we have some $j\in I_1\setminus \{0,1\}$, so $\varphi(W_j)=\overline e_1^{[n]}$, and  since $\psi$ is constant and equal to $0$ on $S_1\bdot g_1^{[-1]}$, and since either $g_1\in \supp(W_0)$ or $\psi(g_1)=0$, we actually have $W_j=e_1^{[n]}$. Hence $ne_1=\sigma(W_j)\in \{f_1, y_jf_1+f_2\}$, both of which contradict $ne_1=(1-y_1)f_1-f_2$ as well as $ne_1=2(1-y_1)f_1-2f_2$ since $\ord(f_2)=m\geq 4$. As this exhausts all possibilities for $\alpha$ and $ne_1+\alpha$, Case E.2,  and also the proof of Claim E, is now complete.
\end{proof}

In view of Claim E, we can choose the representatives $e_1$, $e_2$ and $e_3$ such that
$$\psi(g)=0\mbox{ for all $g\in \supp(S_1\bdot S_2)$}\quad\und\quad \psi(g)=\alpha\mbox{ for all $g\in \supp(S_3)$}.$$ Thus all terms in the sequence $S_1$ are equal to $e_1$, all terms in $S_2$ are equal to $e_2$, and all terms in $S_3$ are equal to $e_3+\alpha=xe_2+e_1+\alpha$.
In view of Claim C, we can assume there is a block decomposition
$S=W_0\bdot\ldots\bdot W_{2m-2}$ such that at least one of  $W_0$ or $W_1$ has type (I). This allows us to divide the remainder of the proof into two cases.

\subsection*{Case 1:} $S_\sigma=f_1^{[m-1]}\bdot f_2^{[m-1]}\bdot (f_1+f_2)$ with $\sigma(W_0),\sigma(W_1)\in \{f_1,f_1+f_2\}$.

\begin{proof} Since $\sigma(W_0),\sigma(W_1)\in \{f_1,f_1+f_2\}$,
performing $(e_2+,e_3)$- and $(e_3+,e_2)$-swaps between the blocks $W_0$ and $W_1$ and applying Lemma \ref{lemma-pertub-II(weak)} yields  $\alpha, ne_1\in \la f_2\ra$.
Any block $W_j$ with $j\in I_1\setminus \{0,1\}$ has $ne_1=\sigma(W_j)\in \{f_1,f_1+f_2,f_2\}$, and since $ne_1\in \la f_2\ra$, all such blocks (of which there is at least one by Claim B) must have $ne_1=\sigma(W_j)=f_2$. In every block $W_j$ with $\sigma(W_j)=f_2$ has $W_j=e_1^{[n]}$, then $\vp_{e_1}(S)=(|I_1|-1)n=mn$, so that $e_1^{[mn]}$ is a proper zero-sum subsequence of $S$, contradicting that $S$ is a minimal zero-sum sequence. Therefore we instead conclude that there is some block $W_j$ with $\sigma(W_j)=f_2$ and either $W_j=e_2^{[n]}$ or $W_j=(e_3+\alpha)^{[n]}$. Thus either $ne_2=f_2$ or $n(e_3+\alpha)=xne_1+ne_2+n\alpha=f_2$, and both cases imply $ne_1,ne_2,n(e_3+\alpha)=xne_1+ne_2+n\alpha\in \la f_2\ra$ in view of $\alpha\in \la f_2\ra$. As a result, letting  $i\in [1,2m-2]$ be a block with $\sigma(W_i)=f_1+f_2$ or $f_1$ (which exists since $m\geq 3$), we find that $f_1+f_2=\sigma(W_i)\in \{ne_1,ne_2,n(e_3+\alpha)\}\subseteq \la f_2\ra$ or $f_1=\sigma(W_i)\in \{ne_1,ne_2,n(e_3+\alpha)\}\subseteq \la f_2\ra$, neither of which is not possible.

\end{proof}

 \subsection*{Case 2:} Either $f_1$ is the unique term with multiplicity $m-1$ in $S_\sigma$, or else $S_\sigma=f_1^{[m-1]}\bdot f_2^{[m-1]}\bdot (f_1+f_2)$ with $\{\sigma(W_0),\sigma(W_1)\}=\{f_1,f_2\}$.

\begin{proof}
If $f_1$ is the unique term with multiplicity $m-1$ in $S_\sigma$ and $W_0$ and $W_1$ both have type (I), then performing $(e_2+,e_3)$- and $(e_3+,e_2)$-swaps between the type (I) blocks $W_0$ and $W_1$ and applying Lemma \ref{lemma-pertub-I} yields $-\alpha=(xe_1+e_2)-(e_3+\alpha)=0$ and $ne_1+\alpha=((n-x)e_1+e_3+\alpha)-e_2=0$, implying $ne_1=-\alpha=0$. But then $e_1^{[n]}$ is a proper zero-sum subsequence of $S$, contradicting that $S$ is a minimal zero-sum sequence.
Therefore we instead conclude that one of the blocks $W_0$ or $W_1$ has type (I) and the other has type (II). Moreover, $\{\sigma(W_0),\sigma(W_1)\}=\{f_1,f_2\}$ when $S_\sigma=f_1^{[m-1]}\bdot f_2^{[m-1]}\bdot (f_1+f_2)$ by case hypothesis.
As a result, performing $(e_2+,e_3)$- and $(e_3+,e_2)$-swaps between the  blocks $W_0$ and $W_1$ and applying Lemma \ref{lemma-pertub-I} or Lemma \ref{lemma-pertub-II(weak)} yields $$-\alpha=(xe_1+e_2)-(e_3+\alpha)\in \{0,F\}\quad\und\quad ne_1+\alpha=((n-x)e_1+e_3+\alpha)-e_2\in \{0,F\},$$ where $F=\sigma(W_0)-\sigma(W_1)$.
Note $F=(1-y_1)f_1-f_2$ if $W_0$ has type (I), and $F=(y_0-1)f_1+f_2$ if $W_0$ has type (II).

If $\alpha=0$ and $ne_1+\alpha=0$, then $ne_1=0$. If $\alpha=0$ and $ne_1+\alpha=F$, then $ne_1=F$. If  $\alpha=-F$ and $ne_1+\alpha=0$, then $ne_1=F$. If  $\alpha=-F$ and $ne_1+\alpha=F$, then $ne_1=2F$. If $ne_1=0$, then $e_1^{[n]}$ is a proper zero-sum subsequence of $S$, contradicting that $S$ is a minimal zero-sum sequence. We conclude that $ne_1\in \{F,2F\}$. In view of Claim B, there is some block $W_j$ with $W_j=e_1^{[n]}$, forcing $ne_1=\sigma(W_j)\in \{f_1,y_jf_1+f_2\}$. Since we also have $ne_1\in \{F,2F\}$ with  $F=(1-y_1)f_1-f_2$ or $F=(y_0-1)f_1+f_2$, we find this is only possible (in view of  $\ord(f_2)=m\geq 4$) if $W_0$ and $W_j$ both have type (II) with  $$\sigma(W_0)=y_0f_1+f_2\quad\und\quad ne_1=\sigma(W_j)=F=(y_0-1)f_1+f_2=y_jf_1+f_2.$$

 If  $S_\sigma=f_1^{[m-1]}\bdot f_2^{[m-1]}\bdot (f_1+f_2)$, then our case hypothesis ensures that $y_0=0$ and $-f_1+f_2=(y_0-1)f_1+f_2=\sigma(W_j)\in \{f_2,f_1+f_2\}$, which is not possible in view of $\ord(f_1)=m\geq 3$. Therefore it remains to consider the case when $f_1$ is the unique term with multiplicity $m-1$ in $S_\sigma$.

 If $W_i$ is a block with $i\in [1,2m-2]$, then we have $W_i=e_1^{[n]}$ or $W_i=e_2^{[n]}$ or $W_i=(e_3+\alpha)^{[n]}$. If $W_i=e_1^{[n]}$, then $\sigma(W_i)=ne_1=(y_0-1)f_1+f_2$ as shown above. If every type (II) block $W_i$ with $i\in [1,2m-2]$ has $W_i=e_1^{[n]}$, then $\vp_{e_1}(S)=(|I_1|-1)m=mn$, in which case $e_1^{[mn]}$ is a proper zero-sum subsequence of $S$, contradicting that $S$ is a minimal zero-sum sequence. Therefore there must be some $k\in [1,2m-2]$ with $W_{k}$ a type (II) block and either  $W_{k}=e_2^{[n]}$ or $W_{k}=(e_3+\alpha)^{[n]}$. Thus either $ne_2=\sigma(W_k)=y_kf_1+f_2$ or $ne_3+n\alpha=\sigma(W_k)=y_kf_1+f_2$.
 Swapping the role of $\overline e_2$ and $\overline e_3$ if need be, we can w.l.o.g.  assume the latter occurs, so $ne_3+n\alpha=\sigma(W_k)=y_kf_1+f_2$.
 This ensures that every type (I) block $W_i$ with $i\in [1,2m-2]$ must have $W_i=e_2^{[n]}$ with $ne_2=\sigma(W_i)=f_1$ (since $ne_3+n\alpha=y_kf_1+f_2\neq f_1$ and $ne_1=(y_0-1)f_1+f_2\neq f_1$). Performing a $(e_2+,e_3)$-swap between the type (II) blocks $W_0$ and $W_k$ and applying Lemma \ref{lemma-pertub-I}, we find that $-\alpha=(xe_1+e_2)-(e_3+\alpha)\in \la f_1\ra$. Combined with the previous conclusion that $-\alpha\in \{0,F\}$ with $F=(y_0-1)f_1+f_2$, it follows that $\alpha=0$. Summarizing these conclusions, we now know
 \begin{align*}
 ne_1=(y_0-1)f_1+f_2,\quad ne_2=f_1,\quad\alpha=0,\quad \und\quad ne_3=y_kf_1+f_2.
 \end{align*}
Thus $y_kf_1+f_2=ne_3=xne_1+ne_2=(x(y_0-1)+1)f_1+xf_2$, which forces $x\equiv 1\mod m$ and $y_k\equiv y_0\mod m$. As a result, \emph{every} type (II) block $W_i$  for $i\in [0, 2m-2]$ has $\sigma(W_i)\in \{\sigma(W_0), \,ne_1,\, ne_3\}=\{y_0f_1+f_2, (y_0-1)f_1+f_2\}$.
Letting $t\in [0,m]$ be the number of type (II) blocks $W_i$ with $\sigma(W_i)=(y_0-1)f_1+f_2$, so that
$m-t$ is the number of type (II) blocks $W_i$ with $\sigma(W_i)=y_0f_1+f_2$, we have $$t(y_0-1)+(m-t)y_0\equiv \Summ{i\in B_{(II)}}y_i\equiv 1\mod m,$$ whence $t=m-1$, contradicting that $f_1$ is the unique term of $S_\sigma$ having multiplicity $m-1$, 
which completes the case and proof.
\end{proof}
\end{proof}


\begin{thebibliography}{99}

\bibitem{Schlage-case9-propB} G. Bhowmik, I. Halupczok, J.
Schlage-Puchta,
The structure of maximal zero-sum free sequences,
\emph{Acta Arith.} 143 (2010), no. 1, 21--50.


\bibitem{Gao-Ger-propB} W. Gao and A. Geroldinger, On zero-sum sequences in $\Z/n\Z\oplus \Z/n\Z$, \emph{Integers} 3 (2003), Paper A08, 45p.

\bibitem{propB-GGG}  Weidong Gao, A. Geroldinger, D. J.  Grynkiewicz,  Inverse zero-sum problems III, \emph{Acta Arith.} 141 (2010), no. 2, 103-–152.

\bibitem{Ger-book} A. Geroldinger and F. Halter-Koch, \emph{Non-Unique Factorizations}, Algebraic, Combinatorial and Analytic Theory, Pure
and Applied Mathematics, vol. 278, 700p, Chapman \& Hall/CRC, 2006.

\bibitem{Gr-book} D. J. Grynkiewicz, \emph{Structural Additive Theory},
Developments in Mathematics, 30, Springer, 2013, 426 pp.

\bibitem{Gr-uzi}  D. J. Grynkiewicz, U. Vishne,  The index of small length sequences, \emph{Internat. J. Algebra Comput.} 30 (2020), no. 5, 977-–1014.

\bibitem{Gr-CL} D. J. GrynkiewicZ, Chao Liu, A Multiplicative Property for Inverse Zero-Sum Problems in Rank Two Abelian Groups, preprint.

\bibitem{Reiher-propB-thesis} C. Reiher, \emph{A proof of the theorem according to which every prime number possesses property B}, PhD Thesis, Rostock,
2010.

\bibitem{Prop-Schmid} W. Schmid, Inverse zero-sum problems II, \emph{Acta Arith.} 143 (2010), no. 4, 333-–343.

\bibitem{Wolfgang-propC} W. Schmid, Restricted inverse zero-sum problems in groups of rank $2$,
\emph{Quarterly journal of mathematics}, 63 (2012), no. 2, 477--487.

\bibitem{Olson-rk2} J. E. Olson,
A combinatorial problem on nite Abelian groups II,
\emph{J. Number Theory} 1 (1969), 195--199.

\bibitem{emdeBoas-propC} P. van Emde Boas, A combinatorial problem on finite abelian
groups II, \emph{Math. Centrum Amsterdam Afd. Zuivere Wisk.}  ZW-007 (1969), 60pp.



\end{thebibliography}
\end{document}